%
%
%
%
%
\RequirePackage{fix-cm}
\documentclass[smallcondensed]{svjour3}     
\smartqed  
\usepackage{graphicx}
%
%

%

\usepackage{times,a4wide,mathrsfs}
\usepackage{amsmath}
\usepackage{amsfonts}


\newcommand{\C}{\mathbb{C}}

\newcommand{\QQ}{\mathbb{Q}}
\newcommand{\NN}{\mathbb{N}}

\newcommand{\grif}{\hbox{Griff}}

\newcommand{\gr}{\hbox{Gr}}

\newcommand{\ima}{\hbox{Im}}
\newcommand{\rom}{\romannumeral}

\newtheorem{convention}{Conventions}

\newtheorem{nonumbering}{Theorem}

\newtheorem{nonumberingc}{Corollary}

%
 \journalname{}
\begin{document}

\title{A short note on the weak Lefschetz property for Chow groups
}


\author{Robert Laterveer 
}


\institute{CNRS - IRMA, Universit\'e de Strasbourg \at
              7 rue Ren\'e Descartes \\
              67084 Strasbourg cedex\\
              France\\
              \email{laterv@math.unistra.fr}           
           }

\date{}

\maketitle

\begin{abstract} 
Motivated by the Bloch--Beilinson conjectures, we formulate a certain covariant weak Lefschetz property for Chow groups. We prove this property in some special cases, using Kimura's nilpotence theorem.

\keywords{Algebraic cycles \and Chow groups \and Finite--dimensional motives \and Weak Lefschetz }
 \subclass{ 14C15 \and  14C25 \and  14C30}
\end{abstract}

\section{Introduction}
\label{intro}

Let $X$ be a smooth projective variety over $\C$ of dimension $n$. 
The Chow groups $A^iX$ (of codimension $i$ algebraic cycles modulo rational equivalence) are notoriously hard to understand. For instance, the following conjecture dating from 1974 is still completely open for $i>1$:

\begin{conjecture}[Hartshorne \cite{H}]\label{harts} If $Y\subset X$ is a smooth hyperplane section, restriction induces isomorphisms
  \[ A^iX_{\QQ}\ \stackrel{\cong}{\to}\ A^{i}Y_{\QQ}\]
  for $2i<n-1$.
  \end{conjecture}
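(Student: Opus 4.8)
The plan is to deduce the isomorphism from its cohomological counterpart by passing to the category of rational Chow motives and using Kimura's nilpotence theorem to upgrade ``isomorphism modulo homological equivalence'' to ``isomorphism of motives.'' Write $\iota\colon Y\hookrightarrow X$ for the inclusion and $h\in A^1X_{\QQ}$ for the hyperplane class. Restriction $\iota^*\colon A^iX_{\QQ}\to A^iY_{\QQ}$ is induced by the transposed graph ${}^t\Gamma_\iota$, a correspondence of degree $0$, so it defines a morphism of Chow motives $\iota^*\colon\mathfrak h(X)\to\mathfrak h(Y)$. Fixing Chow--Künneth decompositions $\mathfrak h(X)=\bigoplus_k\mathfrak h^k(X)$ and $\mathfrak h(Y)=\bigoplus_k\mathfrak h^k(Y)$, the map $\iota^*$ respects the grading, and one recovers $A^iX_{\QQ}=\bigoplus_k\operatorname{Hom}(\mathbf{1},\mathfrak h^k(X)(i))$. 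Assuming Murre's vanishing (that $\pi^k$ kills $A^iX_{\QQ}$ for $k>2i$), only the summands with $k\le 2i<n-1$ contribute, so it is enough to prove the \emph{motivic weak Lefschetz statement}: for every $k<n-1$ the map $\iota^*\colon\mathfrak h^k(X)\to\mathfrak h^k(Y)$ is an isomorphism of Chow motives.

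The next step builds an explicit inverse modulo homological equivalence. On Betti cohomology the Lefschetz hyperplane theorem already gives $\iota^*\colon H^k(X,\QQ)\xrightarrow{\cong}H^k(Y,\QQ)$ for $k<n-1$. Writing $L_Y=\cdot\,(h|_Y)$ for the Lefschetz operator on $Y$ and $\Lambda$ for a correspondence inverting hard Lefschetz $L^{n-k}=\cdot\,h^{n-k}\colon H^k(X)\xrightarrow{\cong}H^{2n-k}(X)$, I would set
\[
 \Psi_k\ :=\ \Lambda\circ\iota_*\circ L_Y^{\,n-1-k}\colon\ \mathfrak h^k(Y)\longrightarrow\mathfrak h^k(X).
\]
A count of degrees shows $\Psi_k$ is a morphism of degree $0$, and a one--line computation using the projection formula $\iota_*\iota^*(-)=h\cdot(-)$ together with $\iota_*(1)=[Y]=h$ shows that, in cohomology, $\Psi_k\circ\iota^*=\mathrm{id}$ and $\iota^*\circ\Psi_k=\mathrm{id}$.

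For the final step, assume $\mathfrak h(X)$ and $\mathfrak h(Y)$ are finite--dimensional in Kimura's sense; then so are their summands $\mathfrak h^k(X)$ and $\mathfrak h^k(Y)$. By the previous step the self--correspondences $\Psi_k\iota^*-\mathrm{id}$ and $\iota^*\Psi_k-\mathrm{id}$ are homologically trivial, hence nilpotent by Kimura's theorem. Therefore $\Psi_k\iota^*$ and $\iota^*\Psi_k$ are invertible, so $\iota^*\colon\mathfrak h^k(X)\to\mathfrak h^k(Y)$ is an isomorphism for all $k<n-1$. Applying $\operatorname{Hom}(\mathbf{1},-(i))$ and summing over $k$ then gives $A^iX_{\QQ}\xrightarrow{\cong}A^iY_{\QQ}$ in the range $2i<n-1$.

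The obstacles are exactly the deep inputs that make this machine run, none of which is available in general. The correspondence $\Lambda$ is algebraic only under the Lefschetz standard conjecture $B(X)$; the reduction to low weights invokes Murre's conjecture (B); and the whole argument rests on finite--dimensionality of $\mathfrak h(X)$ and $\mathfrak h(Y)$, the latter a genuine extra hypothesis, since finite--dimensionality is not known to pass to a hyperplane section. Conceptually, the cohomological isomorphism controls only the image of $A^iX_{\QQ}$ in cohomology; it is the homologically trivial cycles predicted by the Bloch--Beilinson philosophy whose behaviour is forced solely through nilpotence, and this is where the real difficulty lies. The method therefore succeeds precisely for those $X$ (and $Y$) for which finite--dimensionality, $B(X)$ and Murre's conjectures are all known---abelian varieties, varieties dominated by products of curves, and a few related classes---which is why only special cases are within reach and the conjecture stays open in general.
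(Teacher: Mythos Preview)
The statement you were handed is Hartshorne's 1974 conjecture, which the paper explicitly presents as \emph{open} and does not prove; indeed the very next sentence of the paper reads ``Since this seems a very difficult problem, in this note we try and formulate a covariant weak Lefschetz property \ldots\ and hope this is easier.'' So there is no ``paper's own proof'' to compare against. You recognise this yourself in your final paragraph, where you list the conjectural inputs ($B(X)$, Murre's conjectures, Kimura finite--dimensionality of both $X$ and $Y$) that your sketch requires, and you correctly conclude that the argument only goes through in the handful of cases where all of these are known.

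It is still instructive to contrast your strategy with what the paper actually establishes. The paper's Theorem~\ref{main} proves only a fragment of Conjecture~\ref{harts}: under finite--dimensionality of $X$, $B(X)$, the Voisin standard conjecture, and a coniveau hypothesis $H^i(X)=N^rH^i(X)$ for $i\in[n-r+1,n]$, one gets injectivity of $A^i_{AJ}(X)_{\QQ}\to A^i_{AJ}(Y)_{\QQ}$ for $i\le r+1$ (and a dual surjectivity statement for push--forward). The mechanism is quite different from yours. You work with correspondences between $X$ and $Y$, build an explicit inverse $\Psi_k$ to $\iota^\ast$ on each $\mathfrak h^k$, and then need nilpotence on \emph{both} $X$ and $Y$---which is why you must assume $\mathfrak h(Y)$ is finite--dimensional, a hypothesis that (as you note) is not known to be inherited from $X$. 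The paper instead never leaves $A^n(X\times X)_{\QQ}$: it pushes the K\"unneth projectors $\pi_i^Y$ forward to self--correspondences $\Pi_i$ of $X$, decomposes the diagonal $\Delta_X$ homologically into pieces $\Pi_i$, $P_i$, ${}^t\Pi_i$, and applies Kimura nilpotence once, for $X$ alone. The price for this economy is the coniveau hypothesis~(\rom3), which is what allows the middle K\"unneth components (where no $\Pi_i$ built from $Y$ is available) to be replaced by cycles $P_i$ supported on a small $Z\times Z$; the gain is that no hypothesis at all is imposed on the motive of $Y$, and neither Chow--K\"unneth decompositions nor Murre's vanishing are assumed anywhere. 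Your route, were its inputs available, would deliver the full isomorphism of Conjecture~\ref{harts}; the paper's route delivers less, but from hypotheses that are genuinely checkable in the examples of Corollaries~\ref{nocsv} and~\ref{nocsv2}.
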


Since this seems a very difficult problem, in this note we try and formulate a covariant weak Lefschetz property for Chow groups and hope this is easier. To emphasize that we consider the Chow groups as a homology theory, we now switch to the notation $A_iX=A^{n-i}X$.
Let $A_i^{hom}$ and $A_i^{AJ}$ denote the subgroup of homologically trivial resp. Abel--Jacobi trivial cycles.

 To fix ideas, let's now consider $A_0X$, the Chow group of $0$--cycles. 
Since $H^{2n}(X,\QQ)$ is one--dimensional, obviously
  \[ A_0Y_{\QQ}\ \to\ {A_0X_{\QQ}/ A_0^{hom}X_{\QQ}}\]
  is surjective for any point $Y$ of $X$---and in particular, for a $0$--dimensional complete intersection $Y\subset X$. The next step is that (by weak Lefschetz applied to $H^{2n-1}(X,\QQ)$)
  \[  A_0Y_{\QQ}\ \to\ {A_0X_{\QQ}/ A_0^{AJ}X_{\QQ}}\]
  is surjective, for any smooth complete intersection curve $Y\subset X$.
  Going beyond the Abel--Jacobi map, it is conjectured there is a filtration $F^\ast$ on $A_0$, of which the first two steps are $F^1=A_0^{hom}$ and $F^2=A_0^{AJ}$ (cf. \cite{J2}, \cite{Mu}, \cite{MNP}, \cite{Vo}). One can then ask:
  
  \begin{question}\label{question} Is it true that
    \[ A_0Y_{\QQ}\ \to\ {A_0X_{\QQ}/ F^{\ell+1}}\]
    is surjective, for any smooth complete intersection $Y\subset X$ of dimension $\ell$ ?
    \end{question}
    
  This question is motivated (pun intended) by the expectation that the quotient ${A_0X_{\QQ}/ F^{\ell+1}}$ is determined by the cohomology groups $H^{2n}X, H^{2n-1}X, \ldots, H^{2n-\ell}X$. Since the filtration $F^\ast$ only exists conjecturally, this question is not falsifiable. However, it is expected that $F^{\ell+1}$ vanishes exactly when $H^nX,\ldots, H^{\ell+1}X$ are supported in codimension $1$. This gives the following conjecture, in which $F^\ast$ does not appear:
  
  \begin{conjecture}\label{conjecture} Let $X$ be a smooth projective variety, and suppose $H^i(X,\QQ)=N^1 H^i(X,\QQ)$ for $i\in[\ell+1,n]$. Then
    \[ A_0Y_{\QQ}\ \to\ A_0X_{\QQ}\]
    is surjective, for any smooth complete intersection $Y\subset X$ of dimension $\ell$.
    \end{conjecture}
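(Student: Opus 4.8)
The plan is to deduce the statement from a suitable decomposition of the diagonal, Kimura's nilpotence theorem being the tool that turns a cohomological decomposition into a statement about rational equivalence. Accordingly the argument will require that the motive of $X$ be finite--dimensional, and this (together with what is needed to algebraize the decomposition) is the source of the ``special cases'' of the abstract. We may of course assume $\ell<n$, the case $\ell=n$ being trivial.

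\emph{Reduction to a diagonal decomposition.} Suppose one can find cycles $\Gamma_1,\Gamma_2\in A^n(X\times X)_{\QQ}$, with $\Gamma_1$ supported on $D\times X$ for some divisor $D\subset X$ and $\Gamma_2$ supported on $X\times Y$, such that $\gamma:=\Delta_X-\Gamma_1-\Gamma_2$ is homologically trivial. This suffices. Indeed, acting on $A_0X_{\QQ}$, a cycle supported on $D\times X$ acts as zero (a $0$--cycle can be moved off the divisor $D$, so that its pullback meets the support in the empty set), while a cycle supported on $X\times Y$ maps $A_0X_{\QQ}$ into $\ima(A_0Y_{\QQ}\to A_0X_{\QQ})$ by the projection formula. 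Hence on the quotient $Q:=A_0X_{\QQ}/\ima(A_0Y_{\QQ}\to A_0X_{\QQ})$ the operators $(\Gamma_1)_\ast$ and $(\Gamma_2)_\ast$ vanish while $(\Delta_X)_\ast=\mathrm{id}$, so $\gamma_\ast=\mathrm{id}_Q$. But $\gamma$ is homologically, hence numerically, trivial, so Kimura's nilpotence theorem gives $\gamma^{\circ N}=0$ for some $N$, whence $\mathrm{id}_Q=(\gamma_\ast)^N=(\gamma^{\circ N})_\ast=0$. Thus $Q=0$, which is exactly the asserted surjectivity.

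\emph{The cohomological decomposition.} It remains to build $\Gamma_1,\Gamma_2$, and the first task is the decomposition in cohomology. Write the K\"unneth decomposition $[\Delta_X]=\sum_{i=0}^{2n}\pi_i$ with $\pi_i\in H^i(X,\QQ)\otimes H^{2n-i}(X,\QQ)$. For $i>n$, hard Lefschetz writes every class of $H^i(X,\QQ)$ as $L^{i-n}$ of a class, and since cup--product with the hyperplane class raises the coniveau level, this forces $H^i(X,\QQ)=N^1H^i(X,\QQ)$ automatically; combined with the hypothesis on $[\ell+1,n]$ we get $H^i(X,\QQ)=N^1H^i(X,\QQ)$ for all $i\ge\ell+1$. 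For such $i$ the first tensor factor of $\pi_i$ is supported on a divisor $D$, so $\pi_i$ is supported on $D\times X$. For $i\le\ell$ the Lefschetz hyperplane theorem for the complete intersection $Y\subset X$ makes the Gysin map $H^{2\ell-i}(Y,\QQ)\to H^{2n-i}(X,\QQ)$ onto (an isomorphism for $i<\ell$, surjective for $i=\ell$), so the second tensor factor of $\pi_i$ is supported on $Y$ and $\pi_i$ is supported on $X\times Y$. Grouping the $\pi_i$ yields $[\Delta_X]=\xi_1+\xi_2$ with $\xi_1$ supported on $D\times X$ and $\xi_2$ on $X\times Y$ in cohomology.

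\emph{The main obstacle: algebraization.} The remaining and hardest step is to realize these cohomological supports by genuine algebraic cycles, i.e.\ to produce algebraic $\Gamma_1\subset D\times X$ and $\Gamma_2\subset X\times Y$ with $[\Gamma_1]=\xi_1$, $[\Gamma_2]=\xi_2$; a priori the $\pi_i$, and the partial sums $\xi_1,\xi_2$, are only cohomology classes, so their algebraicity is a K\"unneth/Lefschetz standard-conjecture statement. The $X\times Y$ part I would attempt to construct from the correspondences attached to $Y$ (the graph of $\iota\colon Y\hookrightarrow X$ and its transpose, which induce $\iota_\ast$ and $\iota^\ast$ compatibly on cohomology and on Chow groups); realizing the bottom projectors this way amounts to algebraically inverting the hard Lefschetz isomorphism. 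The $D\times X$ part is governed by $N^1$--cohomology, where the classes descend to a resolution of the divisor $D$, and one hopes to algebraize using the Lefschetz $(1,1)$ theorem together with the finite--dimensionality of the motive of $D$ and induction on dimension. I expect this algebraization to be the crux: it is exactly here that one must restrict to varieties for which the relevant projectors are known to be algebraic --- typically those of abelian type, or those for which the standard conjectures are available, and whose motive is finite--dimensional. Once $\Gamma_1,\Gamma_2$ are algebraic, the homological triviality of $\gamma$ holds by construction and the reduction above completes the proof.
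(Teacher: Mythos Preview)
First, a framing remark: the displayed statement is a \emph{conjecture} in the paper, not a theorem; what the paper actually proves is Theorem~\ref{main}, a variant with the stronger coniveau hypothesis $H^i=N^rH^i$ (rather than $N^1$) and the extra assumptions $B(X)$, finite--dimensionality, and the Voisin standard conjecture. Your write--up correctly anticipates that such hypotheses are unavoidable, and your overall strategy --- a cohomological diagonal decomposition upgraded to rational equivalence via Kimura nilpotence --- is exactly the paper's. Your reduction step (the quotient $Q$ argument) is a clean repackaging of the paper's ``expand $(\Delta-\cdots)^{\circ N}$ and look at the tail of each summand''.

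The organization of the decomposition differs. You split the K\"unneth components into two blocks, one supported on $D\times X$ and one on $X\times Y$. The paper instead uses three families: for small $i$ it \emph{explicitly constructs} correspondences
\[
\Pi_i \;=\; C_i \circ L^{\,n-i-r}\circ (\tau\times\tau)_\ast(\pi_i^Y)
\]
(with $C_i$ the algebraic inverse of $L^{n-i}$ furnished by $B(X)$, and $\pi_i^Y$ algebraic by $B(Y)$), together with their transposes ${}^t\Pi_i$ for large $i$; only the middle range is handled by cycles $P_i$ supported on $Z\times Z$. The payoff of this explicit construction is that the vanishing of $(\Pi_i)_\ast$ on low--dimensional Chow groups, and the factorization of $({}^t\Pi_i)_\ast$ through $A_\ast Y$, are immediate from the shape of the composite --- no support argument is needed for those pieces. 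Your support--based $\Gamma_2$ can be made equally explicit along these lines (your parenthetical about the graph of $\iota$ and inverting hard Lefschetz is pointing at exactly this).

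One genuine correction: your proposed mechanism for algebraizing $\Gamma_1$ --- ``Lefschetz $(1,1)$ together with finite--dimensionality of the motive of $D$ and induction'' --- does not work as stated. The class you must realize lives in $H_{2n}(\widetilde D\times X)$, i.e.\ in codimension $n-1$ on a $(2n-1)$--fold, far from the divisorial range where Lefschetz $(1,1)$ applies; finite--dimensionality alone does not produce the needed lift either. The paper invokes precisely the Voisin standard conjecture (Conjecture~\ref{csv}) at this point to pass from ``$\pi_i$ vanishes off $Z\times Z$ in cohomology'' to an honest cycle $P_i'\in A_n(Z\times Z)_\QQ$ with the right class. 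If you replace your hoped--for mechanism by that hypothesis, your two--block argument goes through and is equivalent to the paper's.
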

    
 

The main result of this note provides a verification of this conjecture in some special cases. As a by--product, we also get the injectivity part of conjecture \ref{harts} in these special cases:

\begin{nonumbering}{(=Theorem \ref{main})} Suppose the Voisin standard conjecture (conjecture \ref{csv}) holds. Let $X$ be a smooth projective variety of dimension $n$, and suppose

\item{(\rom1)} Either the motive of $X$ is finite--dimensional, or $\grif^n(X\times X)_{\QQ}=0$;

\item{(\rom2)} The Lefschetz standard conjecture $B(X)$ holds;

\item{(\rom3)} There exists $r$ such that $H^i(X,\QQ)_{}= N^r H^i(X,\QQ)$ for all $i\in [n-r+1,n]$.

Then for any codimension $r$ smooth complete intersection $Y\subset X$ of class $[Y]=L^{r}\in H^{2r}(X,\QQ)$ with $L$ ample, push--forward maps
  \[A_i(Y)_{\QQ}\ \to\ A_i(X)_{\QQ}\]
  are surjective for $i< r$. Moreover, restriction maps
    \[  A^i_{AJ}(X)_{\QQ}\ \to\ A^i_{AJ}(Y)_{\QQ}\]
    are injective for $i\le r+1$.
\end{nonumbering}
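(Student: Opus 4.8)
The plan is to reduce both assertions to a splitting of the diagonal into idempotent correspondences in $A^n(X\times X)_{\QQ}$ and then invoke the Bloch--Srinivas moving principle on each summand. Concretely, I would aim for a decomposition $\Delta_X=\Pi_1+\Pi_2$ into mutually orthogonal projectors with two properties: $\Pi_2$ is supported on $D\times X$ for some closed $D\subset X$ of codimension $r$, and $\Pi_1$ factors at the level of Chow motives through $M(Y)$, with the ``$Y\to X$'' leg given by the push-forward $i_\ast$. Granting this, surjectivity of push-forward for $i<r$ is immediate: for a dimension-$i$ cycle $z$ one has $\dim z+\dim D<n$, so $z$ may be moved off $D$ and hence $(\Pi_2)_\ast z=0$; therefore $z=(\Pi_1)_\ast z$, which lies in $i_\ast A_i(Y)_{\QQ}$ because $\Pi_1$ factors through $Y$ via $i_\ast$. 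The injectivity statement is the transpose: one uses the complementary decomposition in which $\Pi_1$ factors through $M(Y)$ with the ``$X\to Y$'' leg given by the restriction $i^\ast$, so that $(\Pi_1)_\ast z$ depends only on $i^\ast z$, while $\Pi_2$ is arranged to annihilate $A^i_{AJ}(X)_{\QQ}$; Abel--Jacobi triviality only probes the odd cohomology $H^{2i-1}$, which is what lets the range extend from $i<r$ to $i\le r+1$.

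First I would produce the required splitting in cohomology. Hypothesis (iii) gives $H^i(X,\QQ)=N^r H^i(X,\QQ)$ for $i\in[n-r+1,n]$, and hard Lefschetz propagates this to the Poincar\'e-dual range; the complementary cohomology of $X$ is, by the Lefschetz hyperplane theorem for the complete intersection $Y$ (of class $L^r$ with $L$ ample), either carried isomorphically onto $H^\ast(Y)$ by restriction or reached from $H^\ast(Y)$ by the Gysin map. Using the Lefschetz standard conjecture $B(X)$ of hypothesis (ii) to make the K\"unneth components of $\Delta_X$ algebraic, and Voisin's standard conjecture (conjecture \ref{csv}) to realize the coniveau part of these components by an algebraic correspondence supported in codimension $r$, I would assemble algebraic cohomological projectors $\bar\Pi_1,\bar\Pi_2$ with $\bar\Pi_2$ supported on $D\times X$ and $\bar\Pi_1$ factoring through $Y$, realizing the cohomological prototype of the splitting above.

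The decisive step is to upgrade this cohomological splitting to genuine idempotents in $A^n(X\times X)_{\QQ}$, and here hypothesis (i) is used through Kimura's nilpotence theorem. Any algebraic lift of $\bar\Pi_2$ is idempotent modulo a homologically trivial self-correspondence, which finite--dimensionality of $M(X)$ renders nilpotent; the standard idempotent-lifting lemma over a nil ideal then yields an honest projector $\Pi_2\in A^n(X\times X)_{\QQ}$ with $[\Pi_2]=\bar\Pi_2$, and $\Pi_1:=\Delta_X-\Pi_2$ is the complementary idempotent. When instead $\grif^n(X\times X)_{\QQ}=0$, the homologically trivial error already vanishes up to algebraic equivalence, which suffices for the moving argument. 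What must still be checked is that these lifts retain the geometric features of their cohomology classes, namely that $\Pi_2$ can be chosen supported on $D\times X$ and that the factorization of $\Pi_1$ through $M(Y)$ survives to the Chow level with legs $i_\ast$ and $i^\ast$.

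The main obstacle, I expect, is precisely this last point: passing from a \emph{cohomological} factorization of $\bar\Pi_1$ through $Y$ to a \emph{Chow-theoretic} one whose image is the actual subgroup $i_\ast A_i(Y)_{\QQ}$ rather than merely an abstract subquotient of $A_i(Y)_{\QQ}$, and simultaneously identifying the abstract codimension-$r$ locus $D$ from the coniveau hypothesis with data compatible with the given complete intersection $Y$. This is the juncture at which Voisin's standard conjecture is indispensable, since it is what promotes the coniveau filtration from a Hodge-theoretic to an algebraic-cycle statement; without it only the topological support is available, which cannot by itself manufacture the correspondence supported on $D\times X$. A secondary difficulty is the careful bookkeeping of cohomological degrees needed to extract the precise ranges $i<r$ and $i\le r+1$ from the interplay of weak Lefschetz for $Y$ with the coniveau hypothesis (iii).
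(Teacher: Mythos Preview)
Your overall strategy---assemble a cohomological decomposition of the diagonal from $B(X)$, the coniveau hypothesis, and Voisin's conjecture, then use nilpotence to pass to Chow---matches the paper. The divergence, and the source of your ``main obstacle'', is the insistence on lifting to genuine orthogonal idempotents $\Pi_1+\Pi_2=\Delta$ in $A^n(X\times X)_{\QQ}$. The difficulty you flag is real: idempotent-lifting preserves the support of $\Pi_2$ on $D\times X$ (the lift is a polynomial without constant term in the chosen representative, and compositions of correspondences supported on $D\times X$ remain so), but then $\Pi_1:=\Delta-\Pi_2$ has no reason to factor through $Y$ at the Chow level, since $\Delta$ certainly does not. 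Lifting a representative of $\bar\Pi_1$ supported on $X\times Y$ instead destroys the support condition on $\Pi_2=\Delta-\Pi_1$. You cannot arrange both simultaneously by this mechanism, so as written the argument does not close.

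The paper circumvents this by never asking for idempotents. One builds explicit Chow-level correspondences that already carry the needed geometric features: for $i\le n-r$, set
\[
  \Pi_i := C_i \circ L^{n-i-r}\circ (\tau\times\tau)_\ast \pi_i^Y \ \in A^n(X\times X)_{\QQ},
\]
with $\tau\colon Y\hookrightarrow X$ and $C_i$ the algebraic inverse to $L^{n-i}$ furnished by $B(X)$; a direct check gives $\Pi_i=\pi_i$ in $H^{2n}(X\times X)$. By construction $(\Pi_i)_\ast$ factors through the Gysin restriction to $Y$, so $(\Pi_i)_\ast A_jX_{\QQ}=0$ for $j<r$, while $({}^t\Pi_i)_\ast$ factors through $\tau_\ast$, so its image lies in $\tau_\ast A_jY_{\QQ}$---all of this at the Chow level from the outset. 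For $n-r<i<n+r$ one uses hypothesis (\rom3), hard Lefschetz and the Voisin standard conjecture to produce $P_i$ supported on $Z\times Z$ with $Z$ of codimension $r$ and $P_i=\pi_i$ cohomologically. Then
\[
  \Delta-\sum_{i\le n-r}\Pi_i-\sum_{n-r<i<n+r} P_i-\sum_{i\le n-r}{}^t\Pi_i
\]
is homologically trivial, hence nilpotent by Kimura (or by Voisin--Voevodsky in the $\grif^n(X\times X)_{\QQ}=0$ case). Raising to the $N$th power and expanding yields $\Delta=\sum_j Q_j$ in $A^n(X\times X)_{\QQ}$, where each $Q_j$ is a \emph{composition} of the pieces $\Pi_\ell,\ P_\ell,\ {}^t\Pi_\ell$. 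For surjectivity one inspects the factor of $Q_j$ that acts last: if it is a $\Pi_\ell$ or $P_\ell$ it annihilates $A_iX_{\QQ}$ for $i<r$, and if it is a ${}^t\Pi_\ell$ the image lands in $\tau_\ast A_iY_{\QQ}$. For injectivity one inspects the factor that acts first and argues dually. No idempotent is ever produced, and the factorization through $Y$ is never in jeopardy because it was built into the Chow-level pieces from the start rather than sought after lifting.
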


In certain cases some of the hypotheses are automatically satisfied, and the statement simplifies:

\begin{nonumberingc}{(cf. Corollary \ref{nocsv})} Let $X$ be a smooth projective 3fold which is dominated by a product of curves. Suppose 
  \[H^3(X,\QQ)=N^1 H^3(X,\QQ)\ .\] 
  Then for any smooth ample hypersurface $Y\subset X$, the push--forward map
  \[ A_0(Y)_{\QQ}\ \to\ A_0(X)_{\QQ}\]
  is surjective, and
  \[A^2_{AJ}(X)_{\QQ}\ \to\ A^2_{AJ}(Y)_{\QQ}\]
  is injective.
\end{nonumberingc}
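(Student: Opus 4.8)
The plan is to deduce the corollary from Theorem \ref{main} by specializing to $n=3$ and $r=1$, checking that all three hypotheses of that theorem hold automatically for a threefold dominated by a product of curves, and---crucially---that the Voisin standard conjecture (conjecture \ref{csv}) can be dispensed with in this lowest codimension. First I would match up the data: a smooth ample hypersurface $Y\subset X$ has class $[Y]=L$ with $L=[Y]$ ample, which is exactly the requirement $[Y]=L^r$ for $r=1$; hypothesis (iii) with $n=3$, $r=1$ asks for $H^i(X,\QQ)=N^1H^i(X,\QQ)$ over the range $i\in[3,3]$, i.e. precisely the assumption $H^3(X,\QQ)=N^1H^3(X,\QQ)$; and the conclusions of Theorem \ref{main} specialize to surjectivity of $A_i(Y)_{\QQ}\to A_i(X)_{\QQ}$ for $i<1$ (that is, $i=0$) and to injectivity of $A^i_{AJ}(X)_{\QQ}\to A^i_{AJ}(Y)_{\QQ}$ for $i\le 2$ (in particular $i=2$), which are the two assertions of the corollary.

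Next I would verify hypotheses (i) and (ii). For (i): since $X$ is dominated by a product of curves, its motive is a direct summand---cut out by a suitable correspondence---of the motive of that product; products of curves have finite--dimensional motive by Kimura--O'Sullivan, and finite--dimensionality passes to direct summands, so the motive of $X$ is finite--dimensional. For (ii): the Lefschetz standard conjecture $B(X)$ is known for varieties dominated by a product of curves, since these have abelian motive and $B$ holds for abelian varieties (Lieberman) and is stable under the relevant operations; I would invoke this.

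The hard part will be removing the Voisin standard conjecture. In the proof of Theorem \ref{main} it is this conjecture that lets one algebraically realize the coniveau hypothesis (iii)---valid over the whole range $[n-r+1,n]$---as an actual self--correspondence of $X$, and thereby build the projector that acts on Chow groups. For $r=1$ only the single middle group $H^3$ intervenes, and $H^3=N^1H^3$ says precisely that $H^3$ is supported on a divisor $D\subset X$. In this coniveau--one situation the correspondence can be manufactured by hand: resolving $D$ gives a smooth surface $\wt D\to X$ whose Gysin push--forward $H^1(\wt D,\QQ)\to H^3(X,\QQ)$ surjects onto $N^1H^3=H^3$, and $B(X)$ furnishes the algebraicity of the Lefschetz operators needed to assemble this into a self--correspondence of $X$ inducing the identity on the relevant part of the motive. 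Hence Voisin's conjecture is never called upon when $r=1$.

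Finally I would run the verified inputs through the mechanism of Theorem \ref{main}: finite--dimensionality, via Kimura's nilpotence theorem, upgrades the cohomological splitting produced by the coniveau--one correspondence to a splitting of Chow motives, whence the surjectivity of $A_0(Y)_{\QQ}\to A_0(X)_{\QQ}$ and the injectivity of $A^2_{AJ}(X)_{\QQ}\to A^2_{AJ}(Y)_{\QQ}$. Beyond citing Theorem \ref{main}, the only genuinely new point is the bypass of Voisin's conjecture in codimension one described above.
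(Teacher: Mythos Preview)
Your reduction to Theorem \ref{main} with $n=3$, $r=1$ is exactly what the paper does, and your verification of hypothesis (\rom1) via Kimura's result on varieties dominated by products of curves is the intended one. The proposal is correct, but it diverges from the paper's own argument in two places, and in both the paper takes a shorter path.

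For hypothesis (\rom2), you invoke the domination by a product of curves together with Lieberman's $B$ for abelian varieties and stability under surjections. The paper instead observes that the coniveau assumption $H^3(X,\QQ)=N^1H^3(X,\QQ)$ alone already forces $X$ to be ``motivated by a surface'' in Arapura's sense \cite{A}, whence $B(X)$. Your route is fine for the specific statement at hand, but the paper's argument buys more: it shows that the general Corollary \ref{nocsv} holds without any domination hypothesis---finite--dimensionality (or vanishing of $\grif$) is the only extra input needed beyond the coniveau condition.

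For the bypass of the Voisin standard conjecture, you propose to manufacture the correspondence $P_3$ by hand from a resolution $\wt D\to X$ of a supporting divisor. This can be made to work, but it is more laborious than necessary, and your sketch glosses over how to arrange that the resulting self--correspondence acts exactly as $\pi_3$ (rather than merely as the identity on $H^3$). The paper's argument is much crisper: tracing through the proof of Theorem \ref{main}, conjecture \ref{csv} is only invoked to produce an algebraic class in $H_{2n}(Z\times Z,\QQ)$ with $Z\subset X$ of codimension $r$; here $n=3$, $r=1$, so $\dim(Z\times Z)=4$ and one needs $H_6$, i.e.\ divisor classes, where the conjecture is known unconditionally (remark \ref{csvtrue}). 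No ad hoc construction is required.
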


\begin{nonumberingc}{(=Corollary \ref{nocsv2})} Let $X$ be a product of smooth projective surfaces
  \[ X=S_1\times\cdots\times S_m\ ,\]
  where each $S_j$ is either a $K3$ surface of Picard number $19$ or $20$, or has $A_0^{AJ}(S_j)_{\QQ}=0$.
  Suppose at least one $S_j$ has $A_0^{AJ}(S_j)_{\QQ}=0$.
  Then for any smooth ample hypersurface $Y\subset X$, the push--forward map
       \[ A_0(Y)_{\QQ}\ \to\ A_0(X)_{\QQ}\]
  is surjective, and
  \[A^2_{AJ}(X)_{\QQ}\ \to\ A^2_{AJ}(Y)_{\QQ}\]
  is injective.
\end{nonumberingc}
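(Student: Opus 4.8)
The plan is to deduce the statement from the main theorem (Theorem~\ref{main}) applied with $r=1$. Indeed, with $r=1$ a smooth ample hypersurface $Y\subset X$ is exactly a codimension-one complete intersection with $[Y]=L=L^{1}$, $L$ ample, and $\dim X=n=2m$. For $r=1$ the theorem then yields surjectivity of $A_i(Y)_{\QQ}\to A_i(X)_{\QQ}$ for $i<1$, i.e. for $i=0$, and injectivity of $A^i_{AJ}(X)_{\QQ}\to A^i_{AJ}(Y)_{\QQ}$ for $i\le r+1=2$, in particular for $i=2$. This is precisely the assertion. Everything thus reduces to verifying, for the specific variety $X=S_1\times\cdots\times S_m$, the hypotheses (i)--(iii) of Theorem~\ref{main} together with Voisin's standard conjecture (Conjecture~\ref{csv}).

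First I would establish finite-dimensionality of the motive of $X$, giving hypothesis (i). The motive of a $K3$ surface of Picard number $19$ or $20$ is finite-dimensional (a result of Pedrini). For a surface $S_j$ with $A_0^{AJ}(S_j)_{\QQ}=0$, Mumford's theorem first forces $p_g(S_j)=0$; the Guletskii--Pedrini analysis of the Chow--K\"unneth decomposition then shows that the transcendental motive $t_2(S_j)$ vanishes, so $h(S_j)$ is a sum of Tate motives and the finite-dimensional motive of the Albanese. Since finite-dimensionality is stable under direct sums and products (Kimura), the motive of $X$ is finite-dimensional. Hypothesis (ii), the Lefschetz standard conjecture $B(X)$, holds because $B$ is known for surfaces and is stable under products.

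The heart of the argument is hypothesis (iii), which for $r=1$ asks only that $H^{n}(X,\QQ)=N^1H^{n}(X,\QQ)$ with $n=2m$. I would argue via the K\"unneth decomposition
\[ H^{2m}(X,\QQ)\;=\;\bigoplus_{a_1+\cdots+a_m=2m} H^{a_1}(S_1,\QQ)\otimes\cdots\otimes H^{a_m}(S_m,\QQ) \]
together with the subadditivity of coniveau under exterior products: a summand lies in $N^1$ as soon as one tensor factor has coniveau $\ge 1$. For a smooth surface the coniveau-zero part of $H^{\bullet}$ lives only in degrees $0$ (namely $H^0$), $1$ (namely $H^1$, when $q>0$) and $2$ (namely the transcendental part $t_2\subset H^2$, when $p_g>0$), since $H^3$ has coniveau $\ge 1$ (it has no $(3,0)$-part and equals the $\cup L$-image of $H^1$) and $H^4$ has coniveau $2$. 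Hence the maximal degree carried by a coniveau-zero class is $2$ for a $K3$ factor but at most $1$ for the distinguished factor, say $S_1$, with $A_0^{AJ}(S_1)_{\QQ}=0$ (because $t_2(S_1)=0$). Summing, a K\"unneth summand with all factors of coniveau zero can have total degree at most $1+2(m-1)=2m-1<2m$; therefore every summand in degree $2m$ has a factor of positive coniveau and lies in $N^1$. This proves $H^{2m}(X,\QQ)=N^1H^{2m}(X,\QQ)$, and it is precisely here that the hypothesis that some $S_j$ kills $A_0^{AJ}$ is \emph{indispensable}.

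The remaining, and most delicate, point is to verify Voisin's standard conjecture (Conjecture~\ref{csv}) for $X$; I expect this to be the main obstacle. Whereas (i)--(iii) use only finite-dimensionality and Hodge theory, Conjecture~\ref{csv} is a statement about the existence of specific algebraic self-correspondences. I would verify it by exploiting that $X$ is a product of surfaces whose motives are of $K3$ or abelian type, for which the relevant Lefschetz-type correspondences are available, and that finite-dimensionality allows one to spread these correspondences across the product. Once Conjecture~\ref{csv} is checked for $X$, Theorem~\ref{main} applies verbatim and delivers both the surjectivity of $A_0(Y)_{\QQ}\to A_0(X)_{\QQ}$ and the injectivity of $A^2_{AJ}(X)_{\QQ}\to A^2_{AJ}(Y)_{\QQ}$.
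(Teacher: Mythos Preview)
Your treatment of hypotheses (i), (ii), (iii) is correct and agrees with the paper: finite--dimensionality via Pedrini and Guletski\u{\i}--Pedrini, $B(X)$ from $B$ for surfaces and stability under products, and your K\"unneth/coniveau argument for $H^{2m}(X,\QQ)=N^1H^{2m}(X,\QQ)$ is exactly the one the paper declares ``obvious''.

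The gap is in your handling of the Voisin standard conjecture. You propose to \emph{verify Conjecture~\ref{csv} for $X$}, calling this the main obstacle, and sketch an approach via ``Lefschetz-type correspondences'' and ``spreading by finite--dimensionality''. But Conjecture~\ref{csv} is a statement about \emph{every} closed $Y\subset X$ and every $i$; nothing you list gives this, and the conjecture is open for such $X$. Finite--dimensionality of the motive is not known to imply it.

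The paper does not verify Conjecture~\ref{csv} at all. Instead it observes that in the proof of Theorem~\ref{main} this conjecture is invoked at exactly one point: to produce, for each $i$ in the middle range (here only $i=2m$), a cycle $P_i^\prime\in A_n(Z\times Z)_{\QQ}$ whose push--forward represents $\pi_i$ in cohomology. For $X=S_1\times\cdots\times S_m$ these cycles can be written down by hand. The K\"unneth component $\pi_{2m}^X$ is a sum of exterior products of K\"unneth components of the $S_j$, and by your own step~(iii) each such summand has at least one tensor factor of coniveau $\ge 1$: either $H^3(S_j)$, $H^4(S_j)$, the N\'eron--Severi part of $H^2(S_j)$, or (for the distinguished surface with $p_g=0$) all of $H^2(S_j)$. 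In each of these cases the corresponding projector on $S_j$ is already an algebraic cycle supported on (divisor)$\times S_j$ or (divisor)$\times$(divisor); this is the Lefschetz $(1,1)$ theorem and $B(S_j)$, nothing more. Taking products over the $S_j$ gives the required $P_i^\prime$ explicitly. So the right move is not to prove Conjecture~\ref{csv} for $X$, but to bypass it by constructing the single output of that conjecture that the proof of Theorem~\ref{main} actually consumes.
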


It was already known that in situations like these two corollaries, $A_0X_{\QQ}$ is supported on {\sl some\/} divisor (this follows for instance from \cite[Theorem 3.32]{Vo}); thus, our only contribution is the precision that any ample hypersurface does the job. The injectivity statement, on the other hand, seems to be genuinely new: as far as we know, these are the first examples of varieties with non--trivial $A^2_{AJ}$ for which this injectivity is known to hold.\footnote{This is not strictly true: indeed, \cite[Corollary 5]{Fu} gives non--trivial examples of varieties where the injectivity part of conjecture
\ref{harts} is verified.}
The proof of the theorem is an easy exercice in using the meccano of correspondences; the only ``deep'' ingredient is Kimura's nilpotence theorem \cite{K}.  

We end this introduction with a challenge. As is well--known \cite{BS}, the hypothesis of conjecture \ref{conjecture} is verified when $A_0X_{\QQ}$ is supported in dimension $\ell$. This gives the following special case of conjecture \ref{conjecture}:

\begin{conjecture}\label{support} Let $X$ be a smooth projective variety, and suppose $A_0X_{\QQ}$ is supported on a closed subvariety of dimension $\ell$. Then
 any smooth complete intersection $Y\subset X$ of dimension $\ell$ supports $A_0X_{\QQ}$.
 \end{conjecture}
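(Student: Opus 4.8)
The plan is to convert the statement into one about self--correspondences of $X$, cut those correspondences down with the ample complete intersection $Y$, and identify the remaining difficulty as an instance of the homological--versus--rational equivalence problem for $0$--cycles. Since the support hypothesis already gives that $i_{Z*}\colon A_0Z_{\QQ}\to A_0X_{\QQ}$ is surjective, where $Z$ is the $\ell$--dimensional support, it suffices to prove the single inclusion $\operatorname{im}(i_{Z*})\subseteq\operatorname{im}(i_{Y*})$; in words, every $0$--cycle on $Z$ is, up to $\QQ$--rational equivalence, supported on $Y$.

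First I would invoke the Bloch--Srinivas decomposition of the diagonal \cite{BS}: the support hypothesis produces an integer $N$ and
\[ N\,\Delta_X = \Gamma_1 + \Gamma_2 \qquad\text{in } A_n(X\times X)_{\QQ}, \]
with $\Gamma_1$ supported on $X\times Z$ and $\Gamma_2$ on $D\times X$ for a divisor $D$. The term $\Gamma_1$ is a correspondence factoring as $A_0X_{\QQ}\to A_0Z_{\QQ}\xrightarrow{i_{Z*}}A_0X_{\QQ}$, and its fibres over $Z$ are $r$--cycles in $X$, with $r=n-\ell=\operatorname{codim}Y$. The geometric idea is that, because $Y$ is a complete intersection of exactly the complementary dimension and is ample, the refined Gysin intersection $i_Y^{!}$ applied in the $X$--factor sends these $r$--cycles to $0$--cycles on $Y$; this produces a correspondence $\Gamma_1^{Y}$ supported on $Y\times Z$ and, composing with $i_{Y*}$, an operator $\Phi$ on $A_0X_{\QQ}$ whose image lies in $\operatorname{im}(i_{Y*})$.

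Next I would compare $\Phi$ with $N\cdot\operatorname{id}$. Weak Lefschetz for the ample $Y$, together with the coniveau estimate $H^i(X,\QQ)=N^1H^i(X,\QQ)$ for $i\in[\ell+1,n]$ that Bloch--Srinivas extracts from the same hypothesis, should show that $N\cdot\operatorname{id}-\Phi$ is \emph{homologically} trivial on $A_0X_{\QQ}$. When $\ell=1$ this already settles everything without further input: support on a curve forces $A_0X_{\QQ}$ to be representable (Roitman, Bloch--Srinivas), so $A_0^{hom}X_{\QQ}\cong\mathrm{Alb}(X)_{\QQ}$; for the ample curve $Y$ the Jacobian of $Y$ surjects onto $\mathrm{Alb}(X)$ by the Lefschetz theorem, and homological triviality of $N\cdot\operatorname{id}-\Phi$ upgrades at once to rational triviality. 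Thus the curve case is genuinely provable along these lines.

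The hard part will be the range $\ell\ge2$. There $N\cdot\operatorname{id}-\Phi$ is a homologically trivial self--correspondence of $X$, and concluding that it annihilates $A_0X_{\QQ}$ is exactly the kind of statement delivered by Kimura's nilpotence theorem \cite{K} for finite--dimensional motives. The obstruction is that the support hypothesis does \emph{not} imply finite--dimensionality of the motive of $X$, nor the vanishing $\grif^n(X\times X)_{\QQ}=0$: when $\ell\ge2$ the group $A_0X_{\QQ}$ can carry a non--representable part governed by the deeper Chow groups of the $\ell$--dimensional support, on which homological triviality says nothing. Bridging this homological--to--rational gap on the higher part of $A_0X_{\QQ}$, with only the support hypothesis in hand, is precisely where the argument stalls---and is the reason the statement remains a conjecture rather than a theorem, the main result of this note closing the same gap only after adding a finiteness hypothesis.
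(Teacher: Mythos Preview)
The statement you are addressing is labelled a \emph{conjecture} in the paper, not a theorem; the paper offers no proof and says only that it is ``true for $\ell\le 1$, but for $\ell>1$ I have no idea how to prove this''. Your proposal is therefore not competing with any argument in the paper, and you correctly acknowledge this in your final paragraph.

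Your treatment of the case $\ell\le 1$ is sound, though more elaborate than needed. The paper's introduction already observes (without Bloch--Srinivas) that for a complete intersection curve $Y$, weak Lefschetz for $H^{2n-1}$ gives surjectivity of $A_0Y_{\QQ}\to A_0X_{\QQ}/A_0^{AJ}X_{\QQ}$; since support on a curve forces $A_0^{AJ}X_{\QQ}=0$, this finishes $\ell=1$ directly. Your refined-Gysin construction of $\Gamma_1^Y$ and the operator $\Phi$ is not really needed here, and as written it is a little tangled: intersecting $\Gamma_1\subset X\times Z$ with $Y$ in the first factor yields a class in $A_\ell(Y\times Z)$, but without something playing the role of the inverse-Lefschetz correspondences $C_i$ from the proof of Theorem~\ref{main} it is not clear why the resulting $\Phi$ should differ from $N\cdot\mathrm{id}$ by a \emph{homologically} trivial correspondence on all of $H^\ast(X)$.

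Your diagnosis of the obstruction for $\ell\ge 2$---that one is left with a homologically trivial self-correspondence whose action on $A_0X_{\QQ}$ cannot be controlled without a nilpotence input such as Kimura finite-dimensionality or vanishing of $\grif^n(X\times X)_{\QQ}$---matches exactly the role these hypotheses play in Theorem~\ref{main}, and is the honest place where both your outline and the paper stop. One small correction: the reference for Kimura's nilpotence theorem in this paper's bibliography is \cite{Kim3}, not \cite{K} (which is Kleiman).
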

 
 This is true for $\ell\le 1$, but for $\ell>1$ I have no idea how to prove this...

\begin{convention} In this note, the word {\sl variety\/} refers to a quasi--projective algebraic variety over $\C$. A {\sl subvariety\/} will be a (possibly reducible) reduced subscheme which is equidimensional. The Chow group of $i$--dimensional cycles on $X$ is denoted $A_iX$; for $X$ smooth of dimension $n$ the notations $A_iX$ and $A^{n-i}X$ will be used interchangeably. The Griffiths group $\grif_i$ is the group of $i$--dimensional cycles that are homologically trivial modulo algebraic equivalence. In diagrams, we will sometimes write $H^jX$ or $H_jX$ to designate singular cohomology $H^j(X,\QQ)$ resp. Borel--Moore homology $H_j(X,\QQ)$.
\end{convention}

\section{Preliminary}

\begin{definition}[Coniveau filtration \cite{BO}] Let $X$ be a quasi--projective variety. The coniveau filtration on cohomology and on homology is defined as
  \[\begin{split}   N^c H^i(X,\QQ)&= \sum \ima\bigl( H^i_Y(X,\QQ)\to H^i(X,\QQ)\bigr)\ ;\\
                           N_c H_i(X,\QQ)&=\sum \ima \bigl( H_i(Z,\QQ)\to H_i(X,\QQ)\bigr)\ ,\\
                           \end{split}\]
   where $Y$ runs over codimension $\ge c$ subvarieties of $X$, and $Z$ over dimension $\le c$ subvarieties.
 \end{definition}

We recall the statement of the ``Voisin standard conjecture'':

\begin{conjecture}[Voisin standard conjecture \cite{V0}]\label{csv} Let $X$ be a smooth projective variety, and $Y\subset X$ closed with complement $U$. Then the natural sequence
  \[  N_i H_{2i}(Y,\QQ)\to N_i H_{2i}(X,\QQ)\to N_i H_{2i}(U,\QQ)\to 0\]
  is exact for any $i$.
\end{conjecture}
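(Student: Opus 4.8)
The plan is to translate the statement into the language of algebraic cycles and then run the localization sequence. For a variety $W$, the group $N_iH_{2i}(W,\QQ)$ is precisely the image of the cycle class map $\mathrm{cl}_W\colon Z_i(W)_\QQ\to H_{2i}(W,\QQ)$ from the group $Z_i(W)_\QQ$ of $i$-dimensional cycles: a subvariety $Z\subset W$ with $\dim Z<i$ contributes nothing to Borel--Moore homology in degree $2i$, whereas one with $\dim Z=i$ contributes exactly the $\QQ$-span of the fundamental classes of its components. Writing $\iota\colon Y\hookrightarrow X$ and $j\colon U\hookrightarrow X$, the three groups in the sequence are thus the images of $\mathrm{cl}_Y,\mathrm{cl}_X,\mathrm{cl}_U$, and the two arrows are induced by proper push-forward $\iota_*\colon Z_i(Y)_\QQ\to Z_i(X)_\QQ$ and by restriction $j^*\colon Z_i(X)_\QQ\to Z_i(U)_\QQ$, $V\mapsto V\cap U$. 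First I would set up the commutative ladder relating the right-exact sequence of cycle groups $Z_i(Y)_\QQ\to Z_i(X)_\QQ\to Z_i(U)_\QQ\to 0$ to the Borel--Moore localization long exact sequence $\cdots\to H_{2i}(Y,\QQ)\xrightarrow{\iota_*}H_{2i}(X,\QQ)\xrightarrow{j^*}H_{2i}(U,\QQ)\xrightarrow{\partial}H_{2i-1}(Y,\QQ)\to\cdots$ through the cycle class maps.

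With this in place, surjectivity on the right and the inclusion $\ima\iota_*\subseteq\ker j^*$ are formal. A class in $N_iH_{2i}(U,\QQ)$ has the form $\mathrm{cl}_U(z')$ for an $i$-cycle $z'$ on $U$; its Zariski closure $\overline{z'}\subset X$ is an $i$-cycle with $j^*\overline{z'}=z'$, so $\mathrm{cl}_X(\overline{z'})$ lies in $N_iH_{2i}(X,\QQ)$ and maps onto it, giving surjectivity. For the other inclusion, an $i$-cycle supported on $Y$ restricts to the zero cycle on $U$ and hence maps to $0$ under $j^*$.

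The substance lies in the reverse inclusion $\ker j^*\subseteq\ima\iota_*$, equivalently in the identity $N_iH_{2i}(X,\QQ)\cap\ima\bigl(\iota_*\colon H_{2i}(Y,\QQ)\to H_{2i}(X,\QQ)\bigr)=\iota_*\,N_iH_{2i}(Y,\QQ)$. Given $\beta=\mathrm{cl}_X(z)$ with $j^*\beta=0$, exactness of the localization sequence produces $\gamma\in H_{2i}(Y,\QQ)$ with $\iota_*\gamma=\beta$, well defined modulo $\ker\iota_*=\ima\bigl(\partial\colon H_{2i+1}(U,\QQ)\to H_{2i}(Y,\QQ)\bigr)$; what must be shown is that $\gamma$ can be adjusted to lie in the algebraic sub-lattice $N_iH_{2i}(Y,\QQ)$. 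When $\dim Y\le i$ this is automatic: then $H_{2i}(Y,\QQ)$ is top Borel--Moore homology, spanned by fundamental classes of the $i$-dimensional components, so every $\gamma$ is already algebraic. The difficulty is therefore concentrated in the range $\dim Y>i$, where $H_{2i}(Y,\QQ)$ carries non-algebraic classes and $\iota_*$ need not carry the algebraic sub-lattice of $H_{2i}(Y,\QQ)$ onto the algebraic classes in its image.

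To attack this I would pass to a resolution $\tau\colon\widetilde Y\to Y$ with $\widetilde Y$ smooth projective and reduce, by Noetherian induction on $\dim Y_{\mathrm{sing}}$, to the smooth projective case, so that the task becomes lifting the cohomological preimage $\gamma$ to an \emph{algebraic} class. The natural mechanism is an algebraic projector onto $N_iH_{2i}(\widetilde Y)$ inside $H_{2i}(\widetilde Y,\QQ)$ that is compatible with push-forward to $X$: applying it to any cohomological preimage of $\beta$ would manufacture an algebraic one and close the argument. This is exactly the non-formal step and the main obstacle. Producing such a self-correspondence is the content of the Lefschetz standard conjecture $B(\widetilde Y)$ (together with $B(X)$); mixed Hodge theory, by strictness of morphisms of Hodge structures, delivers only a lift that is Hodge-theoretic rather than geometric, and bridging that gap is precisely the generalized Hodge/standard conjecture input. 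This is the reason the assertion is recorded as a conjecture: the diagram chase reduces everything to the algebraicity of a single projector, and that algebraicity is what remains open.
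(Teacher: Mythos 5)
The statement you were asked about is recorded in the paper as a \emph{conjecture}: the paper contains no proof of it, only a remark deriving it from the Hodge conjecture (in Jannsen's homological formulation for possibly singular varieties) via the strictness of morphisms of mixed Hodge structures, plus the observation (remark \ref{csvtrue}) that the cases $i\le 1$ and $i\ge \dim Y-1$ are known. Your proposal is honest on exactly this point, and its content is essentially correct as far as it goes. The identification of $N_iH_{2i}(W,\QQ)$ with the image of the cycle class map on $i$-cycles is right (Borel--Moore homology $H_{2i}$ of a variety of dimension $\le i$ is spanned by fundamental classes of $i$-dimensional components, and vanishes if the dimension is $<i$), and your verification that right-exactness and the complex property are formal consequences of the localization sequence is correct; this matches the known fact that the entire content of the conjecture is exactness in the middle, i.e.\ the identity $N_iH_{2i}(X,\QQ)\cap\ima\bigl(\iota_*\colon H_{2i}(Y,\QQ)\to H_{2i}(X,\QQ)\bigr)=\iota_*\,N_iH_{2i}(Y,\QQ)$, which you isolate cleanly. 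Your reduction parallels the paper's own remark: the mixed-Hodge-theoretic exact sequence on $\gr^W_{-2i}(\cdot)\cap F^{-i}$ produces a Hodge-theoretic lift of $\beta$ to $Y$, and the only missing step is converting that lift into an algebraic class.

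Two imprecisions are worth flagging. First, the input the paper pinpoints as sufficient is the (homology) Hodge conjecture for the possibly singular variety $Y$, not the \emph{generalized} Hodge conjecture; your phrase ``generalized Hodge/standard conjecture input'' overshoots, though the spirit is right, since the paper also cites Voisin's result that this conjecture together with algebraicity of the K\"unneth components is equivalent to the Lefschetz standard conjecture. Second, your claim that an algebraic projector onto $N_iH_{2i}(\widetilde Y)$ is ``the content of'' $B(\widetilde Y)$ needs a word of justification: what one actually uses is that in characteristic zero $B(\widetilde Y)$ implies homological equals numerical equivalence, whence the intersection pairing restricted to algebraic classes is nondegenerate and dual bases of algebraic classes furnish the projector; also, your Noetherian-induction reduction to a resolution silently requires that the lift $\gamma$ can be moved into the lowest-weight part of $H_{2i}(Y,\QQ)$ (the image of $H_{2i}(\widetilde Y,\QQ)$), which is again a weight argument, not automatic. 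None of this rescues the argument into a proof---nor should it: the algebraicity step is precisely what is open, and your proposal correctly terminates at the same conjectural wall the paper itself records.
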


\begin{remark} Hodge theory gives an exact sequence
  \[    \gr^W_{-2i} H_{2i}Y\cap F^{-i}\to H_{2i}X\cap F^{-i}\to \gr^W_{-2i} H_{2i}U\cap F^{-i}\to 0\ ,\]
  where $W$ denotes Deligne's weight filtration, and $F$ the Hodge filtration on $H_\ast(-,\C)$.
  Hence if the Hodge conjecture (that is, its homology version for singular varieties \cite{J}) is true, then conjecture \ref{csv} is true. 
  
What's more, this conjecture fits in very neatly with the classical standard conjectures: Voisin shows that conjecture \ref{csv} plus the algebraicity of the K\"unneth components of the diagonal is equivalent to the Lefschetz standard conjecture \cite[Proposition 1.6]{V0}.
 \end{remark}
  
\begin{remark}\label{csvtrue} Conjecture \ref{csv} is obviously true for $i\le 1$ (this follows from the truth of Hodge conjecture for curve classes), and for $i\ge \dim Y-1$ (where it follows from the Hodge conjecture for divisors).
\end{remark} 

The main ingredient used in this note is Kimura's nilpotence theorem:

\begin{theorem}[Kimura \cite{Kim3}]\label{nilp} Let $X$ be a smooth projective variety of dimension $n$ with finite--dimensional motive. Let $\Gamma\in A^n(X\times X)_{\QQ}$ be a correspondence which is homologically trivial. Then there is $N\in\NN$ such that
     \[ \Gamma^{\circ N}=0\ \ \ \ \in A^n(X\times X)_{\QQ}\ .\]
\end{theorem}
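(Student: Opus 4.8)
The plan is to read $\Gamma$ as an endomorphism of the Chow motive $h(X):=(X,\Delta_X,0)$: by definition $A^n(X\times X)_{\QQ}=\operatorname{End}(h(X))$, with composition of correspondences $\circ$ being composition of endomorphisms, so the assertion is precisely that the two--sided ideal of homologically trivial endomorphisms of $h(X)$ is a nil ideal. Finite--dimensionality of the motive means $h(X)=h_+\oplus h_-$, where $h_+$ is evenly finite--dimensional ($\wedge^{a+1}h_+=0$ for some $a\in\NN$) and $h_-$ is oddly finite--dimensional ($\operatorname{Sym}^{b+1}h_-=0$ for some $b\in\NN$); here the exterior and symmetric powers are formed from the action of the symmetric group on tensor powers inside the pseudo--abelian rigid tensor category of Chow motives. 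Equivalently, a single Schur functor $S_\lambda$ annihilates $h(X)$.

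The engine is a categorical Cayley--Hamilton identity. For a pure--parity summand it reads as follows: once $\wedge^{a+1}h_+=0$, every $g\in\operatorname{End}(h_+)$ satisfies the monic relation
\[ \sum_{i=0}^{a+1}(-1)^i\,\lambda_i(g)\,g^{\circ(a+1-i)}=0\ ,\qquad \lambda_0=1,\ \ \lambda_i(g)=\operatorname{tr}(\wedge^i g)\in\operatorname{End}(\mathbf 1)=\QQ\ , \]
the trace being the categorical trace attached to the rigid duality (this comes out of the antisymmetrizer relation in $\QQ[S_{a+1}]$). A ``super'' analogue, with symmetric powers in place of exterior powers, holds for $g\in\operatorname{End}(h_-)$. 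More generally, the vanishing of one Schur functor $S_\lambda(h(X))$ produces, for every $g\in\operatorname{End}(h(X))$, a monic identity $g^{\circ M}+\sum_{k<M}\tau_k\,g^{\circ k}=0$ whose coefficients $\tau_k\in\QQ$ are universal polynomials in the scalar invariants $\operatorname{tr}(\wedge^i g)$ and $\operatorname{tr}(\operatorname{Sym}^j g)$.

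The key point is that homological triviality kills all of these invariants. The Betti realization $R$ (a $\QQ$--linear tensor functor from Chow motives to graded $\QQ$--vector spaces, with $R(h(X))=H^\ast(X,\QQ)$) commutes with the formation of exterior and symmetric powers and with the categorical trace, and it is the identity on $\operatorname{End}(\mathbf 1)=\QQ$; hence $R(\operatorname{tr}(\wedge^i g))=\operatorname{tr}(\wedge^i R(g))$, and likewise for symmetric powers. If $g$ is homologically trivial then $R(g)=0$, so $\wedge^i R(g)=0$ and $\operatorname{Sym}^j R(g)=0$ for all $i,j\ge 1$, whence every invariant of positive index vanishes. The Cayley--Hamilton relation collapses to $g^{\circ M}=0$, and applied to $g=\Gamma$ this yields the desired $N$.

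I expect the main obstacle to be the categorical Cayley--Hamilton formalism itself: setting up exterior, symmetric, and Schur powers together with traces in the rigid tensor category, the master--theorem--type expansion that converts a Schur--functor vanishing into a monic identity with trace coefficients, and the compatibility of the motivic trace with the cohomological one under $R$. A subsidiary point is that the direct sum of an evenly and an oddly finite--dimensional motive is again annihilated by a single Schur functor $S_\lambda$ (a rectangular partition determined by $a$ and $b$); alternatively one sidesteps this by writing $\Gamma$ as a $2\times 2$ block matrix relative to $h(X)=h_+\oplus h_-$, reducing to the pure--parity case on the diagonal and noting that the off--diagonal composites $b\circ c$ and $c\circ b$ are themselves homologically trivial pure--parity endomorphisms, hence nilpotent, at the cost of some matrix bookkeeping. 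Either route leaves the Cayley--Hamilton identity as the genuine content, everything else being formal.
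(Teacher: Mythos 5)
The paper offers no proof of this statement: Theorem \ref{nilp} is imported verbatim from Kimura \cite{Kim3} and used as a black box, so the relevant comparison is with the proof in the cited literature (Kimura's original argument, also exposed in \cite{An} and \cite{MNP}). Your outline follows that argument in its main line: read $\Gamma$ as an endomorphism of $h(X)=h_+\oplus h_-$, use $\wedge^{a+1}h_+=0$ (resp.\ $\operatorname{Sym}^{b+1}h_-=0$) to extract a categorical Cayley--Hamilton identity whose non-leading coefficients are categorical traces $\operatorname{tr}(\wedge^i g)\in\operatorname{End}(\mathbf 1)=\QQ$, and observe that the Betti realization is a tensor functor, identity on $\operatorname{End}(\mathbf 1)$, so homological triviality kills every trace coefficient and the identity collapses to $g^{\circ M}=0$. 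This is exactly the pure-parity step of Kimura's proof, and it is correct as you state it.

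There is, however, a genuine gap in your treatment of the mixed case. Writing $\Gamma=\bigl(\begin{smallmatrix} a & b\\ c & d\end{smallmatrix}\bigr)$ and knowing that $a$, $d$, $b\circ c$ and $c\circ b$ are each nilpotent does \emph{not} make the block matrix nilpotent: powers of $\Gamma$ produce mixed words such as $a\circ(b\circ c)\circ a\circ(b\circ d^{\circ j}\circ c)\circ\cdots$, in which \emph{distinct} nilpotent elements of the corner ideal alternate, and in a noncommutative ring a product of distinct nilpotents need not be nilpotent (already $E_{12}$ and $E_{21}$ are square-zero while $E_{12}E_{21}=E_{11}$ is idempotent). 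What rescues the argument --- and what the literature actually uses --- is that the Cayley--Hamilton step yields something stronger than element-wise nilpotence: the corner ideals $I_\pm$ of homologically trivial endomorphisms of $h_\pm$ are nil of \emph{bounded} exponent ($x^{\circ(a+1)}=0$ for all $x\in I_+$), and by the Nagata--Higman theorem (characteristic zero) a bounded-exponent nil algebra is nilpotent, so $I_\pm$ are nilpotent ideals; inserting $\operatorname{id}=e_++e_-$ into a long product and applying pigeonhole on the signs then shows the full homologically trivial ideal of $\operatorname{End}(h(X))$ is nilpotent, which contains the theorem. Nagata--Higman (or an equivalent combinatorial lemma on the multilinearized identity) is the one non-formal ingredient missing from your write-up. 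Separately, your ``more general'' Schur-functor route is an overclaim: finite-dimensionality implies annihilation by a rectangular Schur functor, but the converse (your ``equivalently'') is not known, and the vanishing of a single $S_\lambda(h(X))$ is not known to produce a monic trace-coefficient identity for arbitrary endomorphisms --- whether Schur-finiteness implies this nilpotence theorem is open. Since you offer that route only as an alternative it does not sink the proposal, but it should be dropped; with the Nagata--Higman repair, your block-matrix route becomes precisely the standard proof.
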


\begin{remark}\label{examples} We refer to \cite{Kim3}, \cite{An}, \cite{MNP} for the definition of finite--dimensional motive. 
Conjecturally, any variety has finite--dimensional motive \cite{Kim3}. What mainly concerns us in the scope of this note, is that there are quite a few examples which are known to have finite--dimensional motive:
varieties dominated by products of curves \cite{Kim3}, $K3$ surfaces with Picard number $19$ or $20$ \cite{P}, any surface with vanishing geometric genus for which Bloch's conjecture has been verified \cite[Theorem 2.11]{GP}, 3folds with nef tangent bundle \cite{I}, certain 3folds of general type \cite[Section 8]{Vial}.
\end{remark}

There is also the following nilpotence result, which predates Kimura's theorem:

\begin{theorem}[Voisin \cite{V9}, Voevodsky \cite{Voe}]\label{VV} Let $X$ be a smooth projective algebraic variety of dimension $n$, and $\Gamma\in A^n(X\times X)_{\QQ}$ a correspondence which is algebraically trivial. Then there is $N\in\NN$ such that
     \[ \Gamma^{\circ N}=0\ \ \ \ \in A^n(X\times X)_{\QQ}\ .\]
    \end{theorem}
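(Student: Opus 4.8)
The plan is to deduce nilpotence under composition from the stronger statement that $\Gamma$ is \emph{smash-nilpotent}, i.e. that the $N$-fold external product $\Gamma^{\times N}:=\Gamma\times\cdots\times\Gamma$ vanishes in $A^{nN}(X^{2N})_\QQ$ for $N\gg0$. The two are linked by the standard observation that iterated composition is a \emph{push-forward of the external product}: the $N$-fold composite $\Gamma^{\circ N}$ is obtained from $\Gamma^{\times N}\in A(X^{2N})_\QQ$ by Gysin restriction to the partial diagonals matching the ``output'' copy of $X$ in one factor with the ``input'' copy in the next, followed by projection to the first and last copies. As both operations send $0$ to $0$, the vanishing $\Gamma^{\times N}=0$ immediately yields $\Gamma^{\circ N}=0$. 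So it suffices to prove smash-nilpotence of any algebraically trivial cycle.

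Next I would reduce smash-nilpotence on $X\times X$ to a statement about a single curve. By definition of algebraic equivalence (and after cutting the parametrizing variety down to a curve by Bertini/Lefschetz) there is a smooth projective connected curve $C$, two points $c_0,c_1\in C$, and a cycle $\mathcal Z\in A^n(C\times X\times X)_\QQ$ with $\Gamma=\mathcal Z_{c_1}-\mathcal Z_{c_0}$. Reading $\mathcal Z$ as a correspondence $\gamma$ from $C$ to $W:=X\times X$, this says exactly $\Gamma=\gamma_\ast(\delta)$ with $\delta:=[c_1]-[c_0]\in A_0(C)_\QQ$ of degree $0$. Since external product is compatible with correspondence push-forward, $\Gamma^{\times N}=(\gamma^{\times N})_\ast(\delta^{\times N})$, where $\gamma^{\times N}$ is the external product correspondence from $C^N$ to $W^N$. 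Hence it is enough to show that the degree-zero $0$-cycle $\delta$ on the curve $C$ is smash-nilpotent.

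This last point---the Key Lemma, that $\delta^{\times N}=0$ in $A_0(C^N)_\QQ$ for $N\gg0$---is the heart of the matter, and the step I expect to be the real obstacle. The mechanism I would use is that $\delta$, being of degree $0$, is a morphism landing in the ``odd'' summand $h^1(C)$ of the Chow motive $h(C)=\mathbf 1\oplus h^1(C)\oplus\mathbf L$, so that $\delta^{\times N}$ corresponds to the $N$-th tensor power of this morphism into $h^1(C)^{\otimes N}$. As a cycle $\delta^{\times N}$ is invariant under permuting the $N$ factors, so it factors through the symmetric power $\operatorname{Sym}^N h^1(C)$ (in the Kimura--O'Sullivan sense, where the Koszul signs turn the symmetric power of an odd motive into an exterior power of the underlying $H^1$). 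Because $h^1(C)$ is oddly finite-dimensional of dimension $2g=\dim H^1(C,\QQ)$ (Shermenev; this is the elementary instance of Kimura's finite-dimensionality), one has $\operatorname{Sym}^N h^1(C)=0$ as soon as $N>2g$, whence $\delta^{\times N}=0$.

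I would remark that invoking finite-dimensionality of the curve motive is, historically, slightly anachronistic for a result that predates Kimura's theorem; the self-contained route of Voevodsky and Voisin instead establishes the Key Lemma by hand, replacing $C$ by its Jacobian $J$, sending $\delta$ to a degree-$0$ point class in $A_0(J)_\QQ$, and reading off the vanishing of $\delta^{\times N}$ from the Beauville decomposition of $A_0(J^N)_\QQ$ under the multiplication-by-$m$ maps. Either way, the curve computation is where all the work sits; the two reduction steps above are formal manipulations with correspondences.
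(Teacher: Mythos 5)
The paper does not actually prove Theorem \ref{VV}: it is imported as a black box from \cite{V9} and \cite{Voe}, so there is no internal proof to compare against. Judged on its own terms, your argument is correct, and in substance it reconstructs the published proofs: both Voevodsky and Voisin establish the stronger smash-nilpotence of algebraically trivial cycles, deduce composition-nilpotence by the same pull--push formalism (composition is a Gysin restriction of the external power to partial diagonals followed by a projection), and reduce to the key vanishing $\delta^{\times N}=0$ in $A_0(C^N)_{\QQ}$ for a degree-zero $0$-cycle on a smooth projective curve. Where you diverge is the key lemma itself: your main route invokes $\operatorname{Sym}^{2g+1}h^1(C)=0$, while Voevodsky argues via $\operatorname{Sym}^N C$ being a projective bundle over the Jacobian for $N\gg 0$ (the alternative you sketch). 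Your Kimura-style route is legitimate and, importantly, non-circular in the context of this paper: the input is only the oddly finite-dimensionality of $h^1(C)$, which is proved directly in \cite{Kim3} (ultimately resting on the motivic decomposition for abelian varieties) and does not depend on Theorem \ref{VV}, so no vicious circle arises even though the paper uses Kimura's nilpotence theorem \ref{nilp} side by side with \ref{VV}. What the Kimura route buys is brevity; what the Jacobian route buys is independence from finite-dimensionality machinery, which is why it is the historically self-contained argument.

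Two points in your write-up should be made explicit rather than left implicit. First, algebraic triviality in general only provides a chain of curves in the parametrizing variety, so a priori $\Gamma=\sum_k(\gamma_k)_\ast(\delta_k)$ with several curves $C_k$; your single-curve reduction therefore needs the supplementary (standard, but not tautological) fact that smash-nilpotent classes form a $\QQ$-vector subspace: expanding $(\alpha+\beta)^{\times N}$, every summand is a permutation pushforward of some $\alpha^{\times a}\times\beta^{\times(N-a)}$, hence vanishes once $N$ exceeds the sum of the two nilpotence exponents. Second, in the symmetrizer step one should note that the invariance $\sigma\circ\delta^{\times N}=\delta^{\times N}$ holds at the level of Chow motives because the commutativity constraint there carries no Koszul signs (these appear only in the cohomological realization, which is exactly why $\operatorname{Sym}^N$ of the odd motive $h^1(C)$ realizes to $\wedge^N H^1$); with $\QQ$-coefficients the projector $\frac{1}{N!}\sum_{\sigma}\sigma$ is available, so $\delta^{\times N}$ factors through $\operatorname{Sym}^N h^1(C)=0$ for $N>2g$. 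With these two remarks supplied, your proof is complete.
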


\section{Main}

We now proceed with the proof of the main result of this note:

\begin{theorem}\label{main} Suppose the Voisin standard conjecture holds. Let $X$ be a smooth projective variety of dimension $n$, and suppose

\item{(\rom1)} Either the motive of $X$ is finite--dimensional, or $\grif^n(X\times X)_{\QQ}=0$;

\item{(\rom2)} The Lefschetz standard conjecture $B(X)$ holds;

\item{(\rom3)} $H^i(X,\QQ)_{}= N^r H^i(X,\QQ)$ for all $i\in [n-r+1,n]$.

Then for any codimension $r$ smooth complete intersection $Y\subset X$ of class $[Y]=L^{r}\in H^{2r}(X,\QQ)$ with $L$ ample, push--forward maps
  \[A_i(Y)_{\QQ}\ \to\ A_i(X)_{\QQ}\]
  are surjective for $i< r$.
   Moreover, restriction maps
    \[  A^i_{AJ}(X)_{\QQ}\ \to\ A^i_{AJ}(Y)_{\QQ}\]
    are injective for $i\le r+1$.
    \end{theorem}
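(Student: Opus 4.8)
The plan is to reduce both assertions to a single decomposition of the diagonal modulo homological equivalence, and then to upgrade it to rational equivalence by nilpotence. First I record the role of hypothesis (i): in either alternative, every homologically trivial correspondence $\Gamma\in A^n(X\times X)_{\QQ}$ is nilpotent for $\circ$. Indeed, if the motive of $X$ is finite--dimensional this is Theorem \ref{nilp}; if instead $\grif^n(X\times X)_{\QQ}=0$, then a homologically trivial $\Gamma$ is algebraically trivial and Theorem \ref{VV} applies. Thus the precise output of (i) is: homologically trivial self--correspondences of $X$ are nilpotent.

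\textbf{The decomposition and the main obstacle.} Next I would construct, modulo homological equivalence,
\[\Delta_X=\Gamma_Y+\Gamma_W\quad\text{in }H^{2n}(X\times X)\ ,\]
with $\Gamma_Y\in A^n(X\times X)_{\QQ}$ supported on $X\times Y$ and $\Gamma_W\in A^n(X\times X)_{\QQ}$ supported on $W\times X$ for some closed $W$ with $\dim W\le n-r$. By (ii) the K\"unneth components $\pi_m$ of the diagonal are algebraic, so it suffices to assign each $\pi_m$, whose class lies in $H^{2n-m}(X)\otimes H^m(X)$, to one of the two supports: $\pi_m$ goes to $X\times Y$ once $H^m(X)=N^rH^m(X)$ is realised by the given ample $Y$, and to $W\times X$ once $H^{2n-m}(X)=N^rH^{2n-m}(X)$. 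For $m\ge n+r$ (resp.\ $m\le n-r$) the needed coniveau holds by hard and weak Lefschetz, the Lefschetz operators being algebraic by (ii); moreover the relation $[Y]=L^r$ gives $H^m(X)=\iota_\ast\iota^\ast H^{m-2r}(X)\subseteq\iota_\ast H^{m-2r}(Y)$, so such a class is literally carried by $Y$. For $m\in[n-r+1,n]$ hypothesis (iii) provides $H^m(X)=N^rH^m(X)$, while the remaining $m\in[n+1,n+r-1]$ are handled on the $W$ side by (iii) applied to $2n-m$; this exhausts every $m\in[0,2n]$. The genuinely delicate point, which I expect to be the main obstacle, is the middle range $m\in[n-r+1,n]$: here cup product with $L^r$ is not injective on $H^m(X)$, so one cannot recover the class from its pushforward to $Y$, and one must upgrade the abstract coniveau statement $H^m(X)=N^rH^m(X)$ into an honest algebraic cycle on $X\times Y$ of class $[\pi_m]$. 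This is exactly what the Voisin standard conjecture \ref{csv} supplies: its homological formulation descends $N^r$--classes onto a complete intersection representing $L^r$, giving $[\pi_m]\in H^{2n-m}(X)\otimes\iota_\ast H^{m-2r}(Y)$.

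\textbf{Surjectivity.} Granting the decomposition, set $\Gamma=\Gamma_Y+\Gamma_W$. Then $\Delta_X-\Gamma$ is homologically trivial, hence nilpotent, so $\Gamma=\Delta_X-(\Delta_X-\Gamma)$ is invertible in $(A^n(X\times X)_{\QQ},\circ)$ via a finite Neumann series; in particular $\Gamma_\ast$ is surjective on each $A_i(X)_{\QQ}$. For $i<r$ one has $\dim W+i\le(n-r)+(r-1)<n$, so the moving lemma represents any $z\in A_i(X)_{\QQ}$ by a cycle disjoint from $W$; hence $(\Gamma_W)_\ast z=0$ and $\Gamma_\ast z=(\Gamma_Y)_\ast z\in\iota_\ast A_i(Y)_{\QQ}$. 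Surjectivity of $\Gamma_\ast$ then forces $A_i(X)_{\QQ}=\iota_\ast A_i(Y)_{\QQ}$, which is the first assertion.

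\textbf{Injectivity.} For the restriction statement I use the contravariant action $\Gamma^\ast=({}^t\Gamma)_\ast$ of the same $\Gamma$; since transposition is an anti--automorphism of the correspondence ring, ${}^t\Gamma$ is again invertible, so $\Gamma^\ast$ is injective on each $A^i(X)_{\QQ}$. Let $z\in A^i_{AJ}(X)_{\QQ}$ with $\iota^\ast z=0$. Because $\Gamma_Y$ is supported on $X\times Y$, the intersection $\Gamma_Y\cdot p_2^\ast z$ depends on $z$ only through $\iota^\ast z$, so $(\Gamma_Y)^\ast z=0$. The remaining term $(\Gamma_W)^\ast z$ has image in cycles supported on $W$, and since $\dim W\le n-r$ and $z$ is Abel--Jacobi trivial, a filtration/dimension count makes this vanish in the range $i\le r+1$; this is the one extra routine verification, and it is precisely where the passage from homological to Abel--Jacobi triviality buys the bound $r+1$ rather than $r$. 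Hence $\Gamma^\ast z=0$, and injectivity of $\Gamma^\ast$ gives $z=0$. Everything apart from the Voisin--conjecture input and this last vanishing is the standard meccano of correspondences.
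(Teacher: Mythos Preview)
Your overall architecture is sound and close to the paper's, but there is a genuine error in how you handle the middle range $m\in[n-r+1,n]$. You claim that the Voisin standard conjecture ``descends $N^r$--classes onto a complete intersection representing $L^r$'', so that $\pi_m$ can be realised on $X\times Y$ for the \emph{given} $Y$. This is false. The Voisin conjecture only lifts a class to the subvariety on whose complement it vanishes; it does not let you move support to an unrelated subvariety of the same codimension. Concretely, take $X$ a smooth cubic threefold, $r=1$, $m=3$: then $H^3(X)=N^1H^3(X)$ (the Hodge level is $1$ and the generalised Hodge conjecture is known here), yet $\iota_\ast H^1(Y)=L\cdot H^1(X)=0$ for any ample surface $Y$, since $H^1(X)=0$. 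So $\pi_3$ cannot be supported on $X\times Y$.

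The paper fixes this by introducing a \emph{third} type of piece: for $n-r<m<n+r$ one has, by hard Lefschetz applied to hypothesis (iii), that \emph{both} $H^m$ and $H^{2n-m}$ lie in $N^r$, so $\pi_m$ vanishes on the complement of $Z\times Z$ for some codimension--$r$ subvariety $Z$ (not $Y$), and the Voisin conjecture produces a cycle $P_m$ supported on $Z\times Z$. The double support is what makes the argument go through: $(P_m)_\ast$ on $A_jX_{\QQ}$ factors through $A_{j-r}(Z)=0$ for $j<r$, and on $A^j_{AJ}X_{\QQ}$ it factors through $A^{j-r}_{AJ}(\widetilde Z)=0$ for $j\le r+1$. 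Thus the middle pieces act trivially in both directions, and only the pieces genuinely built from $Y$ (the paper's $\Pi_i$ and ${}^t\Pi_i$, constructed using the K\"unneth projectors of $Y$ and the inverse Lefschetz correspondences $C_i$) contribute. Your two--piece decomposition $\Gamma_Y+\Gamma_W$ can in fact be salvaged by putting \emph{all} of $\pi_m$ for $m\le n+r-1$ on the $W\times X$ side (since $H^{2n-m}=N^rH^{2n-m}$ holds for every such $m$, again by hard Lefschetz and (iii)); but as written, your assignment of the range $[n-r+1,n]$ to $X\times Y$ does not work.
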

    
 In certain cases, some of the hypotheses can be removed:
 
\begin{corollary}\label{nocsv} Let $X$ be a smooth projective variety of dimension $n\le 3$, and suppose

\item{(\rom1)} Either the motive of $X$ is finite--dimensional, or $\grif^n(X\times X)_{\QQ}=0$;


\item{(\rom2)} $H^n(X,\QQ)_{}= N^1H^n(X,\QQ)$.

Then for any smooth ample hypersurface $Y\subset X$, push--forward maps
  \[A_0(Y)_{\QQ}\ \to\ A_0(X)_{\QQ}\]
  are surjective, and restriction
  \[ A^2_{AJ}(X)_{\QQ}\ \to\ A^2_{AJ}(Y)_{\QQ}\]
  is injective.
   
    \end{corollary}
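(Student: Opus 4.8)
The plan is to derive the statement directly from Theorem \ref{main}, specialised to $r=1$, by checking that in dimension $n\le 3$ the two conjectural hypotheses of that theorem become vacuous. First I would line up the data. A smooth ample hypersurface $Y\subset X$ is precisely a codimension $1$ smooth complete intersection whose class $[Y]=L\in H^2(X,\QQ)$ is ample; hypothesis (iii) of Theorem \ref{main} for $r=1$ reads $H^i(X,\QQ)=N^1H^i(X,\QQ)$ for $i\in[n,n]$, which is exactly hypothesis (ii) here, while hypothesis (i) is unchanged. The conclusions of Theorem \ref{main} for $r=1$ are surjectivity of $A_i(Y)_\QQ\to A_i(X)_\QQ$ for $i<1$ (that is, for $A_0$) and injectivity of $A^i_{AJ}(X)_\QQ\to A^i_{AJ}(Y)_\QQ$ for $i\le 2$ (in particular for $i=2$). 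Hence everything reduces to showing that the Voisin standard conjecture \ref{csv} and the Lefschetz standard conjecture $B(X)$ hold automatically in this low-dimensional setting.

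The Voisin standard conjecture is the painless half. By Remark \ref{csvtrue}, conjecture \ref{csv} holds for a pair $Z'\subset Z$ in every index $i$ with $i\le 1$ or $i\ge \dim Z'-1$. Every variety that enters the argument—$X$ itself, and the auxiliary subvarieties furnished by the coniveau hypothesis—has dimension at most $3$, so the gap $\{2,\dots,\dim Z'-2\}$ between the two ranges $\{i\le1\}$ and $\{i\ge\dim Z'-1\}$ is empty and all indices are covered. Thus conjecture \ref{csv} is unconditionally true here, and this hypothesis may be dropped.

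The real work is in removing $B(X)$. For $n\le2$ it is classical (Kleiman). For $n=3$ it is not known in general: the algebraicity of the single operator $\Lambda\colon H^4(X,\QQ)\to H^2(X,\QQ)$ amounts to a Hodge conjecture for codimension $2$ classes on $X\times X$, which is open as soon as $H^2(X)$ carries transcendental classes. The point I would make is that this troublesome middle operator never intervenes when $r=1$. For the surjectivity one only has to support the classes $H^i(X,\QQ)$ with $i\ge 3$ on the chosen ample divisor $Y$: for $i=4,5,6$ this is automatic, because hard Lefschetz exhibits every class in $H^i(X,\QQ)$ as $L\cdot\beta=i_{Y\ast}\,i_Y^\ast\beta$ for a suitable $\beta\in H^{i-2}(X,\QQ)$, manifestly supported on $Y$ via the algebraic correspondences given by cup product with $[Y]$ and by Gysin push-forward, while for $i=3$ it is exactly hypothesis (ii), the support on the specific $Y$ being arranged through the already-discharged Voisin standard conjecture. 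Consequently the only Lefschetz operators that occur are those attached to $H^5$ and $H^6$—controlled by the class of a point and by the Albanese, hence algebraic—together with cup product by the ample class $L$ and Gysin maps from $Y$; none of these requires the transcendental inverse Lefschetz on $H^2$. An identical bookkeeping applies to the injectivity statement, which is governed by $H^3(X,\QQ)=N^1H^3(X,\QQ)$ and again steers clear of the transcendental part of $H^2$.

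Once both conjectural hypotheses are seen to be satisfied, Theorem \ref{main} applies verbatim and delivers the two conclusions. I expect the delicate step to be precisely this last verification: confirming that, for $r=1$ and $n\le 3$, the uses of $B(X)$ in the proof of Theorem \ref{main} are confined to the automatically-algebraic operators above, so that the hypothesis can be removed outright rather than merely relaxed.
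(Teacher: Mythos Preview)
Your overall strategy—specialise Theorem \ref{main} to $r=1$ and verify that the two conjectural inputs become unconditional—is exactly right, and matches the paper. However, the execution diverges from the paper in both places, and in one of them there is a genuine gap.

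\textbf{The Voisin standard conjecture.} Your conclusion is correct but the reasoning is imprecise. In the proof of Theorem \ref{main}, conjecture \ref{csv} is not applied to $X$ or to the coniveau subvariety $Z\subset X$; it is applied to the pair $Z\times Z\subset X\times X$ in homological index $i=n$. For $n=3$ and $r=1$ one has $\dim(Z\times Z)=4$, so your claim that ``every variety that enters the argument has dimension at most $3$'' is false. What actually saves the day is that $n\ge \dim(Z\times Z)-1$ holds precisely when $n\le 3$, so Remark \ref{csvtrue} (the Hodge conjecture for divisors on $Z\times Z$) applies. This is exactly how the paper phrases it.

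\textbf{The Lefschetz standard conjecture $B(X)$.} Here your approach is genuinely different from the paper's, and it does not work as written. The paper does \emph{not} argue that $B(X)$ is unnecessary; it invokes a result of Arapura \cite{A} to show that a threefold with $H^3(X,\QQ)=N^1H^3(X,\QQ)$ is ``motivated by a surface'' and therefore satisfies $B(X)$ outright. Your alternative—that the inverse Lefschetz operator $C_2\colon H^4\to H^2$ ``never intervenes when $r=1$''—is incorrect. Inspect the definition $\Pi_i:=C_i\circ L^{n-i-r}\circ(\tau\times\tau)_\ast(\pi_i^Y)$: for $n=3$, $r=1$ the range is $i\le n-r=2$, and $\Pi_2=C_2\circ(\tau\times\tau)_\ast(\pi_2^Y)$ uses precisely the operator you wish to avoid. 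Without $C_2$ you cannot separate $\pi_2$ (and hence $\pi_4={}^t\pi_2$) from the rest of the diagonal, and for a threefold the algebraicity of $C_2$ is equivalent to $B(X)$. Your cohomological observation that $H^4=L\cdot H^2$ is supported on $Y$ does not produce an \emph{algebraic} lift of $\pi_4$; that is exactly the content of $B(X)$. So the ``delicate step'' you flag would in fact fail, and you should instead cite Arapura as the paper does.
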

   
   \begin{corollary}{\label{nocsv2}} Let $X$ be a product of smooth projective surfaces
  \[ X=S_1\times\cdots\times S_m\ ,\]
  where each $S_j$ is either a $K3$ surface of Picard number $19$ or $20$, or has $A_0^{AJ}(S_j)_{\QQ}=0$.
  
  \item{(\rom1)}
  Suppose at least one $S_j$ has $A_0^{AJ}(S_j)_{\QQ}=0$.
  Then for any smooth ample hypersurface $Y\subset X$, 
       \[ A_0(Y)_{\QQ}\ \to\ A_0(X)_{\QQ}\]
  is surjective, and
  \[A^2_{AJ}(X)_{\QQ}\ \to\ A^2_{AJ}(Y)_{\QQ}\]
  is injective.
  
  \item{(\rom2)} Suppose there are at least $4$ surfaces $S_j$ with $A_0^{AJ}(S_j)_{\QQ}=0$.  
  Let $Y\subset X$ be a codimension $2$ complete intersection of class $[Y]=L^2\in H^4(X,\QQ)$ with $L$ ample.
  Then 
    \[ A_i(Y)_{\QQ}\ \to\ A_i(X)_{\QQ}\]
  is surjective for $i\le 1$, and
  \[A^i_{AJ}(X)_{\QQ}\ \to\ A^i_{AJ}(Y)_{\QQ}\]
  is injective for $i\le 3$.  
\end{corollary}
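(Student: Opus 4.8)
The plan is to deduce both statements from Theorem~\ref{main}, applied with $r=1$ in part (\rom1) and with $r=2$ in part (\rom2). For $X=S_1\times\cdots\times S_m$ of dimension $n=2m$ this reduces to verifying hypotheses (\rom1)--(\rom3) together with the Voisin standard conjecture for $X$, and then reading off the conclusion. A smooth ample hypersurface is a codimension~$1$ complete intersection of class $[Y]=L$ with $L$ ample, and the $Y$ of part (\rom2) is exactly of the form $[Y]=L^2$ with $L$ ample, so the hypotheses on $Y$ hold. Theorem~\ref{main} with $r=1$ then gives surjectivity of $A_0(Y)_\QQ\to A_0(X)_\QQ$ and injectivity of $A^i_{AJ}(X)_\QQ\to A^i_{AJ}(Y)_\QQ$ for $i\le 2$, and with $r=2$ it gives surjectivity of $A_i(Y)_\QQ\to A_i(X)_\QQ$ for $i\le 1$ and injectivity for $i\le 3$; these are precisely the assertions of (\rom1) and (\rom2) (the case $i=1$ of injectivity being vacuous, as $A^1_{AJ}(X)_\QQ=0$).

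First I would dispatch the hypotheses that are insensitive to the numerics. Each factor has finite--dimensional motive: for a $K3$ surface of Picard number $19$ or $20$ this is \cite{P}, while $A_0^{AJ}(S_j)_\QQ=0$ forces $p_g(S_j)=0$ (Mumford) and is exactly Bloch's conjecture for $S_j$, so \cite[Theorem 2.11]{GP} applies; as finite--dimensionality is stable under products \cite{Kim3}, the motive of $X$ is finite--dimensional, giving (\rom1). The Lefschetz standard conjecture $B$ holds for surfaces and is stable under products (see e.g.\ \cite{MNP}), so $B(X)$ holds, giving (\rom2). Finally, by \cite[Proposition 1.6]{V0} (recalled after conjecture~\ref{csv}), $B(X)$ implies conjecture~\ref{csv} for $X$; thus the Voisin standard conjecture need not be assumed.

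The substantive point is hypothesis (\rom3): I must show $H^i(X,\QQ)=N^rH^i(X,\QQ)$ for $i\in[n-r+1,n]$. Here I would use that for a \emph{surface} all coniveau statements are classical and unconditional: $N^1H^2(S_j)_\QQ=\mathrm{NS}(S_j)_\QQ$ by Lefschetz $(1,1)$, $H^3(S_j)$ has coniveau $1$ by weak Lefschetz (it is supported on a hyperplane curve), and $H^4$ has coniveau $2$. Writing $H^2(S_j)_\QQ=\mathrm{NS}(S_j)_\QQ\oplus T_j$ with $T_j$ the transcendental part, one has $T_j\neq 0$ of coniveau $0$ for a $K3$ factor and $T_j=0$ for a factor with $p_g=0$. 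Splitting each $H^\bullet(S_j)_\QQ$ into its coniveau-$0$ part ($H^0,H^1,T_j$) and its coniveau-$\ge 1$ part ($\mathrm{NS}(S_j)_\QQ,H^3,H^4$), Künneth and superadditivity $N^aH^\bullet(S)\otimes N^bH^\bullet(S')\subseteq N^{a+b}H^\bullet(S\times S')$ show that a Künneth summand $\bigotimes_j\alpha_j$ with $\alpha_j\in N^{c_j}$ lies in $N^{\sum_jc_j}$. It then suffices to rule out summands in degree $i\in[n-r+1,n]$ with $\sum_jc_j\le r-1$. Such a summand can only accrue its degree from the coniveau-$0$ pieces, whose per-factor degree is at most $2$ and equals $2$ only through a nonzero $T_j$; a short budget count shows that $r=1$, $i=2m$ forces all $m$ factors to contribute $T_j$, so one vanishing $T_j$ (one factor with $A_0^{AJ}=0$) already excludes it, while for $r=2$, $i\in\{2m-1,2m\}$ the least demanding case (one factor giving $H^3$, degree $3$ coniveau $1$, the rest coniveau $0$ in degree $2m-4$) still requires at least $m-3$ nonzero $T_j$, so four factors with $A_0^{AJ}=0$ exclude every low-coniveau summand.

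The main obstacle is precisely this coniveau bookkeeping: one must check across all Künneth types in the top one or two degrees that the scarce coniveau-$0$ material --- scarce because every $A_0^{AJ}=0$ factor has $T_j=0$, capping its coniveau-$0$ contribution at degree $1$ --- can never accumulate total coniveau below $r$. The threshold ``four'' in part (\rom2) is exactly what defeats the worst case above, and pinning down this number is the only genuinely quantitative step; the rest is a formal application of Theorem~\ref{main}.
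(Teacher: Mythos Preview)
Your overall strategy --- verify hypotheses (\rom1)--(\rom3) of Theorem~\ref{main} and read off the conclusion --- is exactly the paper's. The finite--dimensionality check, the verification of $B(X)$, and your K\"unneth/coniveau bookkeeping establishing $H^{2m}=N^1H^{2m}$ (resp.\ $H^{2m-1}$ and $H^{2m}$ in $N^2$) are all correct; your case analysis pinpointing the threshold ``four'' via the summand $H^3(S_{j_0})\otimes\bigotimes_{j\neq j_0}(\hbox{coniveau }0)$ is more detailed than what the paper writes and is right.

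The gap is your treatment of the Voisin standard conjecture. You invoke \cite[Proposition~1.6]{V0} to say ``$B(X)$ implies conjecture~\ref{csv} for $X$'', but that proposition is an equivalence of conjectures for \emph{all} smooth projective varieties simultaneously; it does not give, for a single $V$ with $B(V)$ known, the exactness of $N_iH_{2i}(Y)\to N_iH_{2i}(V)\to N_iH_{2i}(U)$ for arbitrary closed $Y\subset V$. What is actually needed in the proof of Theorem~\ref{main} is this exactness for $V=X\times X$ and $Y=Z\times Z$ with $Z\subset X$ of codimension~$r$; knowing $B(X)$ or even $B(X\times X)$ says nothing about lifting a Hodge class on $X\times X$ to an \emph{algebraic} class on the singular variety $Z\times Z$. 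So as written, your argument does not remove the dependence on conjecture~\ref{csv}, and the corollary would remain conditional.

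The paper circumvents this differently: rather than appealing to conjecture~\ref{csv}, it constructs the cycles $P_i^\prime$ directly from the product structure. The K\"unneth projector $\pi_i^X$ decomposes as a sum of exterior products $\pi_{a_1}^{S_1}\times\cdots\times\pi_{a_m}^{S_m}$, and for each summand occurring in the relevant degree range your own coniveau count shows that enough factors $\pi_{a_j}^{S_j}$ are supported on (divisor)$\times$(divisor) inside $S_j\times S_j$. Since these supports are explicit for surfaces (via Lefschetz $(1,1)$ for $\pi_2^{S_j}$ on a $p_g=0$ surface, weak Lefschetz for $\pi_1^{S_j}$ and $\pi_3^{S_j}$, etc.), one gets the required $P_i^\prime$ unconditionally --- this is what the paper means by ``using the Hodge conjecture on the surfaces with vanishing geometric genus''. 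Replacing your appeal to Proposition~1.6 with this direct construction fixes the proof.
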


  \begin{proof}(of theorem \ref{main}) 
   Let $\tau\colon Y\hookrightarrow X$ be a smooth complete intersection of class $L^{r}$ as in the statement of the theorem. 
 Let
   \[ L^j\colon H^iX(\QQ)\to H^{i+2j}(X,\QQ)\]
   denote the result of cupping with a power of $L$; we use the same notation $L^j$ for the correspondence inducing this action. Since $B(X)$ is true, for any $i<n$ there exists a correspondence $C_i\in A^{i}(X\times X)_{\QQ}$ inducing an isomorphism
     \[ (C_i)_\ast\colon  H^{2n-i}(X,\QQ)\ \stackrel{\cong}{\to}\ H^i(X,\QQ)\]
  that is inverse to $L^{n-i}$.    
   
 $B(X)$ being true, the K\"unneth components $\pi_i$ of the diagonal of $X$ are algebraic \cite{K}. Since $B(X)$ implies $B(Y)$ \cite{K}, the same holds for the 
  K\"unneth components $\pi_i^Y$ of $Y$. We now proceed to relate them:
  
\begin{lemma} For each $i\le n-r$, define
  \[  \Pi_i:=(C_i)\circ(L^{n-i-r})\circ ((\tau\times\tau)_\ast (\pi_i^Y))\ \ \in A^{n}(X\times X,\QQ)\ .\]
  Then for each $i\le n-r$, we have equality
  \[ \Pi_i=\pi_i\ \ \in H^{2n}(X\times X,\QQ)\ .\]
  \end{lemma}  
  
  \begin{proof} We consider the action on $H^j(X,\QQ)$. There is a factorization
    \[ \begin{array}[c]{ccccccc}
        H^jX&
        \xrightarrow{((\tau\times\tau)_\ast (\pi_i^Y))_\ast}& H^{j+2r}X&\xrightarrow{L^{n-i-r}}&  H^{2n-2i+j}X&\xrightarrow{(C_i)_\ast}&H^jX\\
        \downarrow&&\uparrow &&&&\\
        H^jY&\xrightarrow{(\pi_i^Y)_\ast}& H^jY &&&&\\
        \end{array}\]
    Hence, if $j\not=i$ then
      \[ (\Pi_i)_\ast H^jX=0\ ,\]
      and for $j=i$ we have
      \[ \Pi_i=(C_i)\circ(L^{n-i-r})\circ(L^r)=\hbox{id}\ \ \colon H^iX\to H^iX\ .\]
      It follows that for any variety $Z$, the action of $\Pi_i$ on $H^j(X\times Z)$ is projection on $H^iX\otimes H^{j-i}Z$; thus by Manin's identity principle, $\Pi_i$ and $\pi_i$ coincide as homological correspondences.
 \end{proof} \qed

  \begin{lemma}\label{noaction} For each $i\le n-r$, and each $j<r$, we have
    \[ (\Pi_i)_\ast A_jX_{\QQ}=0\ .\]

    \end{lemma}
    
  \begin{proof} For any correspondence $C\in A^{n-r}(Y\times Y)_{\QQ}$, there is a factorization
  \[\begin{array}[c]{ccc}
         A_jX_{\QQ}&\xrightarrow{((\tau\times\tau)_\ast C)_\ast}& A_{j-r}X_{\QQ}\\
         \downarrow&&\uparrow\\
         A_{j-r}Y_{\QQ}&\xrightarrow{C_\ast}&A_{j-r}Y_{\QQ}\\
         \end{array}\]
   In particular, taking $C=\pi_i^Y$, we see that the action of $(\tau\times\tau)_\ast (\pi_i^Y)$ on $A_jX_{\QQ}$ factors over $A_{j-r}Y_{\QQ}$, hence is $0$ for
   $j<r$.
   
  \end{proof} \qed

 \begin{lemma}\label{supported} Let ${}^t\Pi_i$ denote the transpose of $\Pi_i$. For each $i\le n-r$, and each $j$, we have
   \[ 
               ({}^t\Pi_i)_\ast A_jX_{\QQ}\subset \ima\bigl( A_jY_{\QQ}\to A_jX_{\QQ}\bigr)\  .\\
                              \]
 Moreover, for each $j\le r+1$, we have
    \[ ({}^t\Pi_i)_\ast A^j_{AJ}X_{\QQ}=0\ .\]
    \end{lemma} 
   
\begin{proof} It is immediate from the definition that  
  \[ {}^t\Pi_i={} ((\tau\times\tau)_\ast ({}^t \pi_i^Y))\circ {}^t(L^{n-i-r})\circ{}^tC_i\ \ \in A^n(X\times X)_{\QQ}\ .\]
  Using the same diagram as in the proof of lemma \ref{noaction}, one can find a factorization
  \[\begin{array}[c]{ccccc}
       A_jX_{\QQ}&\ \ \xrightarrow{({}^t(L^{n-i-r})\circ{}^tC_i)_\ast}\ \ \ &          A_{j+r}X_{\QQ}&\xrightarrow{{}((\tau\times\tau)_\ast ({}^t\pi_i^Y))_\ast}& A_{j}X_{\QQ}\\
        && \downarrow&&\uparrow\\
        &&  A_jY_{\QQ}&\xrightarrow{{}^t(\pi_i^Y)_\ast}&A_jY_{\QQ}\ ,\\
         \end{array}\]
       and the lemma is proven.
\end{proof} \qed


 By hypothesis (\rom3), we have 
     \[H^i(X,\QQ)=N^r H^i(X,\QQ)\ \ \forall n-r<i\le n\ .\]
Applying hard Lefschetz, one finds
  \[ H^i(X,\QQ)=N^r H^i(X,\QQ)\ \ \forall n-r<i<n+r\ .\]
This means that in the range $n-r<i<n+r$, the K\"unneth component $\pi_i$ is supported in codimension $r$. That is, there exists a subvariety $Z\subset X$ of codimension $r$, such that for each $n-r<i<n+r$, $\pi_i$ goes to $0$ under the restriction
   \[  H^{2n}(X\times X,\QQ)\ \to\ H^{2n}((X\times X)\setminus (Z\times Z),\QQ)\ .\]
 Using the Voisin standard conjecture (conjecture \ref{csv}), this implies the existence of an algebraic cycle $P^\prime_i\in A_n(Z\times Z)_{\QQ}$ such that (denoting by $P_i$ the push--forward of $P_i^\prime$ to $X\times X$) we have
   \[ P_i=\pi_i\ \ \in H^{2n}(X\times X,\QQ)\ \ \forall  n-r<i<n+r\ .\]
   
   \begin{lemma}\label{noaction2} For any $i\in [n-r+1,n+r-1]$, and any $j<r$, we have
   
    \[ (P_i)_\ast A_jX_{\QQ}=0\ .\]
    Moreover, for any $j\le r+1$, we have
    \[  (P_i)_\ast A^j_{AJ}X_{\QQ}=0\ .\]
    
     \end{lemma}
    
  \begin{proof} 
  Let $\psi\colon Z\to X$ denote the inclusion, so $P_i=(\psi\times\psi)_\ast (P_i^\prime)$. Similar to lemma \ref{noaction}, there is a factorization
  \[\begin{array}[c]{ccc}
         A_jX_{\QQ}&\xrightarrow{(P_i)_\ast}& A_{j}X_{\QQ}\\
         \downarrow&&\uparrow\\
         A_{j-r}Z_{\QQ}&\xrightarrow{(P_i^\prime)_\ast}&A_{j}Z_{\QQ}\ .\\
         \end{array}\]
     That is, the action of $P_i$ in the indicated range factors over groups that vanish for dimension reasons and the lemma follows.    
          \end{proof} \qed

 Putting together the various parts, we find a decomposition of the diagonal
   \[ \Delta= \sum_{i=0}^{n-r} \Pi_i + \sum_{i={n-r+1}}^{n+r-1} P_i+\sum_{i=0}^{n-r} {}^t \Pi_i\ \ \in H^{2n}(X\times X,\QQ)\ .\]
   This is an equality of cycles modulo homological equivalence.
   Now, applying Kimura's nilpotence theorem (theorem \ref{nilp}), we get that there exists $N$ such that
   \[  \Bigl( \Delta - \sum_{i=1}^{n-r} \Pi_i - \sum_{i={n-r+1}}^{n+r-1} P_i-\sum_{i=1}^{n-r} {}^t \Pi_i  \Bigr)^{\circ N}=0\ \ \in A^n(X\times X)_{\QQ}\ .\] 
          
    Developing this expression (and noting that $\Delta^{\circ N}=\Delta$), we find 
    \[ \Delta=\sum_j Q_j\ \ \in A^n(X\times X)_{\QQ}\ ,\]
    where each $Q_j$ is a composition of elements $\Pi_\ell$ and $P_{\ell^\prime}$ and ${}^t\Pi_{\ell^{\prime\prime}}$.
    Let $Q_j^0$ denote the ``tail element'' of $Q_j$, i.e. we write
    \[ Q_j= Q_j^0\circ Q_j^1\circ\cdots\circ Q_j^{N^\prime}\ \ \in A^n(X\times X)_{\QQ}\ ,\]
    with $Q_j^0\not=\Delta$ (so that $N^\prime\le N$).
    
    Let's consider the action of $Q_j$ on $A_iX_{\QQ}$, for $i< r$:
    
    If $Q_j^0$ is a $\Pi_\ell$ (for some $\ell\in[0,n-r]$), it follows from lemma \ref{noaction} that
      \[(Q_j)_\ast \bigl(A_iX_{\QQ}\bigr)=0\ .\] 
      Likewise, if $Q_j^0$ is of the form $P_\ell$ (for some $n-r<\ell<n+r$), then applying lemma \ref{noaction2}, we find again
      \[ (Q_j)_\ast \bigl(A_iX_{\QQ}\bigr)=0\ .\]   
        Finally, if $Q_j^0$ is of the form ${}^t\Pi_\ell$ (for some $\ell\in[0,n-r]$), it follows from lemma \ref{supported} that
      \[  (Q_j)_\ast \bigl(A_iX_{\QQ}\bigr)\subset  \ima\bigl( A_iY_{\QQ}\to A_iX_{\QQ}\bigr)\ .\]
      Since $\Delta$ acts as the identity, we conclude that for $i< r$, push--forward
       \[A_iY_{\QQ}\ \to\ A_iX_{\QQ}\]
       is surjective.
       
     The argument for the injectivity statement is similar: we consider the action of $\Delta=\sum_j Q_j$ on $A^i_{AJ}X_{\QQ}$ for $i\le r+1$. If $Q_j$ is such that its ``head'' $Q_j^{N^\prime}$ is of type ${}^t\Pi_\ell$ or $P_\ell$, then $Q_j$ does not act (by lemma \ref{supported} resp. lemma \ref{noaction2}). It follows that we can write
       \[  A^i_{AJ}X_{\QQ}=\Delta_\ast A^i_{AJ}X_{\QQ}=  \bigl(\sum \hbox{something}\circ (\tau\times\tau)_\ast (\hbox{something})\bigr)_\ast A^i_{AJ}X_{\QQ}\ ;\]
      the injectivity is then obvious.        

Finally, if the hypothesis in (\rom1) of the theorem is that 
  \[\grif^n(X\times X)_{\QQ}=0\ ,\]
  the proof goes as follows: the decomposition of $\Delta$ is now an equality modulo algebraic equivalence (since by hypothesis, algebraic and homological equivalence coincide on $X\times X$).
Then, instead of applying Kimura's theorem, we apply the Voisin/Voevodsky nilpotence theorem (theorem \ref{VV}). The rest of the proof is verbatim the same.
\end{proof} \qed

\begin{proof} (of corollary \ref{nocsv}) 
In case $n=2$, we know $B(X)$ holds since it holds for any surface \cite{K}. The Voisin standard conjecture is used to get that some Hodge classes in $H_4(Z\times Z,\QQ)$ are algebraic, where $\dim Z=1$; this is trivially true.

Next, the case $n=3$.
Under the hypothesis $H^3X=N^1H^3X$, $X$ is ``motivated by a surface'' in the sense of \cite{A}, so $B(X)$ is known to hold \cite{A}.
The Voisin standard conjecture is only used to get that some Hodge classes in $H_6(Z\times Z,\QQ)$ are algebraic, where $\dim Z=2$; this is OK by the Hodge conjecture for divisors (remark \ref{csvtrue}).
\end{proof} \qed

\begin{proof} (of corollary \ref{nocsv2}) As we noted in remark \ref{examples}, it follows from work of Pedrini \cite{P} and Guletski\u{\i}--Pedrini \cite{GP} that the $S_j$ have finite--dimensional motive. Hence $X$ has finite--dimensional motive. We also know $B(X)$ is true since the Lefschetz standard conjecture is true for all surfaces \cite{K}.

In case (\rom1), since there is at least one surface with $H^2(S_j)=N^1$, we obviously have 
  \[ H^{2m}(X,\QQ)=N^1 H^{2m}(X,\QQ)\ .\]
  The corollary now follows from theorem \ref{main}; note that we don't need to assume the Voisin standard conjecture, since we can find cycles $P_i^\prime$ by using the Hodge conjecture on the surfaces with vanishing geometric genus.
  
  In case (\rom2), the assumptions imply
  \[  \begin{split}
           H^{2m}(X,\QQ)&=N^2 H^{2m}(X,\QQ)\ ;\\
           H^{2m-1}(X,\QQ)&=N^2 H^{2m-1}(X,\QQ)\ ,\\
        \end{split}  \]
      and we again apply theorem \ref{main}.              
        \end{proof}

\begin{remark} The hypothesis on $\grif^n(X\times X)$ in theorem \ref{main} is mainly of theoretical interest, and not practically useful. Indeed, there are precise conjectures predicting when Griffiths groups should vanish \cite{J3}; for instance, if $X$ is a 4fold with $h^{2,0}=h^{4,0}=h^{3,0}=h^{2,1}=0$, \cite[Corollary 6.8]{J3} implies that if the Bloch--Beilinson conjectures are true then
  \[ \grif^4(X\times X)_{\QQ}=0\ .\]
  Unfortunately, no non--trivial examples seem to be known. Specifically, I am not aware of any example of a variety $X$ of dimension $n$ that satisfies
  $\grif^n(X\times X)_{\QQ}=0$, but not $A^i_{AJ}X_{\QQ}=0\forall i$.
\end{remark}

  
\begin{remark} In \cite{Lat}, I study a certain hard Lefschetz property for Chow groups. Using arguments similar to the present note, this hard Lefschetz property can be proven in some special cases \cite{Lat}.
\end{remark}

\begin{acknowledgements}
This note was written while preparing for the Strasbourg ``groupe de travail'' based on the monograph \cite{Vo}. I wish to thank all the participants of this groupe de travail for the very pleasant and stimulating atmosphere.
\end{acknowledgements}


\begin{thebibliography}{}

\bibitem{An} Y. Andr\'e, Motifs de dimension finie (d'apr\`es S.-I. Kimura, P. O'Sullivan...), S\'eminaire Bourbaki. Vol. 2003/2004. Ast\'erisque 299 Exp. No. 929, viii, 115---145,


\bibitem{A} D. Arapura, Varieties with very little transcendental cohomology, in: Motives and algebraic cycles (R. de Jeu and J. Lewis, eds.), Fields Institute Communications, Amer. Math. Soc., Providence 2009,

\bibitem{A2} D. Arapura, Motivation for Hodge cycles, Advances in Math. vol. 207 (2006), 





\bibitem{BO} S. Bloch and A. Ogus, Gersten's conjecture and the homology of schemes, Ann. Sci. Ecole Norm. Sup. 4 (1974), 181---202,

\bibitem{BS} S. Bloch and V. Srinivas, Remarks on correspondences and algebraic cycles, American Journal of Mathematics Vol. 105, No 5 (1983), 1235---1253,





\bibitem{F} W. Fulton, Intersection theory, Springer--Verlag Berlin Heidelberg New York 1984,

\bibitem{Fu} B. Fu, Remarks on hard Lefschetz conjectures on Chow groups, Science China Mathematics Vol. 53 No 1 (2010), 105---114,






\bibitem{GP} V. Guletski\u{\i} and C. Pedrini, The Chow motive of the Godeaux surface, in:
Algebraic Geometry, a volume in memory of Paolo Francia (M.C. Beltrametti,
F. Catanese, C. Ciliberto, A. Lanteri and C. Pedrini, editors),
Walter de Gruyter, Berlin New York, 2002,


\bibitem{H} R. Hartshorne, Equivalence relations on algebraic cycles and subvarieties of small codimension, in: Algebraic geometry, Arcata 1974, Proc. Symp. Pure Math. Vol. 29, Amer. Math. Soc., Providence 1975,




\bibitem{I} J. Iyer, Murre's conjectures and explicit Chow--K\"unneth projectors for varieties with a nef tangent bundle, Transactions of the Amer. Math. Soc. 361 (2008), 1667---1681,

\bibitem{J} U. Jannsen, Mixed motives and algebraic K--theory, Springer Lecture Notes in Mathematics 1400 (1990),

\bibitem{J2} U. Jannsen, Motivic sheaves and filtrations on Chow groups, in: Motives (Jannsen et alii, eds.), Proceedings of Symposia in Pure Mathematics Vol. 55 (1994), Part 1,  

\bibitem{J3} U. Jannsen, Equivalence relations on algebraic cycles, in: The arithmetic and geometry
of algebraic cycles (B. Gordon et alii, eds.), Banff Conf. 1998, Kluwer,


\bibitem{Kim3} S. Kimura, Chow groups are finite dimensional, in some sense,
Math. Ann. 331 (2005), 173---201,


\bibitem{K0} S. Kleiman, Algebraic cycles and the Weil conjectures, in: Dix expos\'es sur la cohomologie des sch\'emas, North--Holland Amsterdam, 1968, 359---386, 

\bibitem{K} S. Kleiman, The standard conjectures, in: Motives (Jannsen et alii, eds.), Proceedings of Symposia in Pure Mathematics Vol. 55 (1994), Part 1, 

\bibitem{Lat} R. Laterveer, A brief note concerning hard Lefschetz for Chow groups, to appear in Canadian Math. Bulletin,







\bibitem{M} D. Mumford, Rational equivalence of $0$--cycles on surfaces, J. Math. Kyoto Univ. Vol. 9 No 2 (1969), 195---204,

\bibitem{Mu} J. Murre, On a conjectural filtration on the Chow groups of an algebraic variety, Indag. Math. (N.S.) 4 (1993), 177---201,

\bibitem{MNP} J. Murre, J. Nagel and C. Peters, Lectures on the theory of pure motives, Amer. Math. Soc. University Lecture Series 61, 2013,

\bibitem{P} C. Pedrini, On the finite dimensionality of a $K3$ surface, Manuscripta Mathematica 138 (2012), 59---72,












\bibitem{Vial} C. Vial, Chow-K\"unneth decomposition for 3- and 4-folds fibred by varieties with trivial Chow group of zero-cycles, J. Algebraic Geom. 24 (2015), 51---80,

\bibitem{Voe} V. Voevodsky, A nilpotence theorem for cycles algebraically equivalent to zero, Internat. Math. Research Notices 4 (1995), 187---198,

\bibitem{V9} C. Voisin, Remarks on zero--cycles on self--products of varieties, in: Moduli of vector bundles (Proceedings of the Taniguchi Congress), Maruyama Ed., Decker 1994,

\bibitem{V0} C. Voisin, The generalized Hodge and Bloch conjectures are equivalent for general complete intersections, Annales scientifiques de l'ENS 46, fascicule 3 (2013), 449---475,


\bibitem{Vo} C. Voisin, Chow Rings, Decomposition of the Diagonal, and the Topology of Families, Princeton University Press, Princeton and Oxford, 2014.




\end{thebibliography}


\end{document}